\newtheorem{theorem}{Theorem}[section]
\newtheorem{lemma}[theorem]{Lemma}
\theoremstyle{definition}
\title
{Gr\"unbaum Colorings of Toroidal Triangulations}
\author{Michael O. Albertson\thanks {Department of Mathematics and Statistics, Smith College Northampton, MA 01063 USA}, Hannah Alpert\thanks {340 Fox Drive,
Boulder, CO 80303 USA},\\  sarah-marie belcastro$^*$\thanks{corresponding author: email smbelcas@toroidalsnark.net, phone 413-341-1373, fax 413-585-3786}, and Ruth Haas$^*$}
\begin{document}
\maketitle



\begin{abstract} We prove that if $G$ is a triangulation of the torus and $\chi(G) \neq 5$, then 
there is a $3$-coloring of the edges of $G$ so that the edges bounding every face are assigned three different colors.

\end{abstract}

{\bf Keywords:}  embedding, edge coloring, Gr\"unbaum coloring, Gr\"unbaum conjecture, triangulation


\section{Introduction}\label{sec:intro}

Our story begins with Kempe's famous (but false) proof of the Four Color Conjecture (4CC).  Subsequently Tait claimed, but did not publish, another proof based on the (false) belief that every cubic $3$-connected planar graph contains a Hamilton cycle.  Tait did understand that a $4$-coloring of the faces of such a graph is equivalent to a $3$-coloring of its edges.  See \cite{blw} for details of this fascinating story.  If one dualizes Tait's observation, one obtains an equivalent version of the 4CC for planar graphs, one with a different notion of edge coloring. We say that a $3$-coloring of the edges of an embedded triangulation is a {\it Gr\"unbaum coloring}
 if every facial triangle is incident with three different colors.  Note this places no constraint on the colors assigned to the edges of either a separating or a non-contractible $3$-cycle, nor does it require incident edges to receive different colors.  In 1968 Gr\"unbaum offered a far-reaching
  generalization of the 4CC: every triangulation of every orientable surface has a Gr\"unbaum coloring \cite{gc}.   Since the first proof of the Four Color Theorem (4CT) \cite{ah1, ah2, rsst}, Gr\"unbaum's conjecture has gained increasing notoriety \cite{arch}.

The purpose of this paper is to prove the following.

\begin{theorem} \label{mainthm}   If $G$ is a triangulation of the torus with $\chi(G) \neq 5$, then  $G$ has a Gr\"unbaum coloring.
\end{theorem}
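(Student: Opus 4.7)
The plan is to reduce the existence of a Gr\"unbaum coloring to a suitable vertex coloring via the classical Tait bijection, then handle the handful of high chromatic numbers left over on the torus. Identify the four colors with the group $\mathbb{Z}_2 \oplus \mathbb{Z}_2$. Given any proper vertex coloring $\phi \colon V(G) \to \mathbb{Z}_2 \oplus \mathbb{Z}_2$, define the edge coloring $\psi(uv) = \phi(u) + \phi(v)$. Since $\phi$ is proper, each $\psi(uv)$ is a nonzero element of $\mathbb{Z}_2^2$, so $\psi$ uses only the three colors in $\mathbb{Z}_2^2 \setminus \{0\}$. On any facial triangle $uvw$ the three endpoint colors are distinct, and if two of $\phi(u)+\phi(v),\,\phi(v)+\phi(w),\,\phi(w)+\phi(u)$ coincided then two vertex colors would coincide; hence these three edge labels enumerate all three nonzero elements of $\mathbb{Z}_2^2$, and $\psi$ is a Gr\"unbaum coloring. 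This disposes of the cases $\chi(G) \in \{3,4\}$ at a stroke.

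By Heawood's bound $\chi(G) \le 7$ on the torus, so only $\chi(G) \in \{6,7\}$ remain once the hypothesis excludes $\chi(G) = 5$. A useful general reduction is that Gr\"unbaum colorings propagate across degree-$3$ vertices: if $x$ has neighbors $u,v,w$, these form a triangle $uvw$ in $G - x$, and any Gr\"unbaum coloring of the smaller triangulation (with $uvw$ as a face) extends uniquely to $G$, since on each new face $xuv$, $xvw$, $xwu$ the edge at $x$ is forced to take the color missing from the face $uvw$. Thus one may iteratively strip degree-$3$ vertices and focus on an irreducible ``core'' of minimum degree at least $4$. For $\chi(G) = 7$, Dirac's extension of the Heawood bound forces $K_7 \subseteq G$; since $K_7$ already triangulates the torus, one has an explicit Gr\"unbaum coloring by labelling vertices by $\mathbb{Z}_7$ and assigning each edge $\{i,j\}$ the difference class $|i-j| \bmod 7 \in \{1,2,3\}$, and the extension to larger triangulations follows from the degree-$3$ reduction together with minor local adjustments.

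The hardest case, and the one I expect to absorb most of the work, is $\chi(G) = 6$. No analogue of the Tait bijection applies directly here, so the plan is to invoke the structural theory of $6$-chromatic triangulations of the torus, breaking into a bounded family of irreducible configurations and producing a Gr\"unbaum coloring on each by direct construction. The omission of $\chi(G) = 5$ is itself telling: $5$-chromatic toroidal triangulations evade the Tait bijection (they lack a proper $4$-coloring) \emph{and} the clean structural reductions that constrain the $\chi \ge 6$ regime, making them precisely the family for which the present methods cannot give a uniform answer.
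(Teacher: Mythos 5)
Your $\chi\le 4$ argument via $\mathbb{Z}_2\oplus\mathbb{Z}_2$ is exactly the paper's treatment of $4$-colorable triangulations, and your difference-class coloring of $K_7$ matches its handling of $6$-regular toroidal triangulations. But there are two genuine gaps. First, your reduction from $G$ down to a ``core'' relies only on stripping degree-$3$ vertices, and that does not suffice: once you locate $K_7$ (or any spanning subtriangulation $H$) inside $G$, the portion of $G$ lying inside a face of $H$ is an arbitrary planar triangulation of a triangle, which need not contain any interior vertex of degree $3$ (place an octahedral configuration inside a face, for instance). The paper closes this by invoking the Four Color Theorem to Gr\"unbaum-color each such planar piece and then permuting its three colors to match the prescribed boundary triangle; your ``minor local adjustments'' is concealing an appeal to the 4CT, not a minor fix.

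Second, and more seriously, the case $\chi(G)=6$ --- which is the actual content of the paper --- is not addressed beyond a statement of intent. The relevant structure theorem (Thomassen) says such a $G$ contains exactly one of $K_6$, $C_3+C_5$, $H_7+K_2$, or $C_{11}^3$, and only the last of these triangulates the torus, so the extension strategy that works for $\chi=7$ does not apply. The others embed with non-triangular faces (one quadrilateral for $C_3+C_5$ and for $H_7+K_2$; three squares, or a square and a pentagon, or a hexagon, in the four embeddings of $K_6$), and the central difficulty is that the triangulated interiors of those faces induce boundary colorings that you do not get to choose. One must determine the embeddings, classify which boundary colorings can arise (via parity and Kempe-chain arguments on separating polygons), show how Kempe changes move between them, and exhibit explicit partial Gr\"unbaum colorings of each embedded critical graph compatible with every resulting case. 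None of this appears in your proposal, so the $6$-chromatic case --- the bulk of the theorem --- remains an open hole rather than a sketch.
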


Recently Kochol discovered counterexamples to Gr\"unbaum's conjecture on every orientable surface $S_g$ with
 $g \geq 5$ \cite{k}.
  Our result contrasts with Kochol's.   The following problems remain open:  Does every $5$-chromatic toroidal triangulation have a Gr\"unbaum coloring?  For $2\leq g \leq4$, does every triangulation of $S_g$ have a Gr\"unbaum coloring?  Does every {\it locally planar} triangulation of $S_g$ (those embedded with large edge width) have a Gr\"unbaum coloring?
  
Our proof is a divide-and-conquer
 argument and consists of a sequence of lemmas. In the second section we show that if $\chi(G) \leq 4, \chi(G) = 7$, or $G$ is $6$-regular, then $G$ has a Gr\"unbaum coloring.  In the third section we treat $6$-chromatic graphs whose critical $6$-chromatic subgraphs are not $K_6$.  Finally,  in the fourth section we complete the proof by examining those $6$-chromatic toroidal triangulations that contain $K_6$.   

\section{First Results }\label{sec:easy}

Our first result is folklore.  Many who think about Gr\"unbaum's conjecture would dualize the statement and reproduce Tait's proof.  We prefer a direct argument inspired by graph homomorphisms.

\begin{lemma} \label{lem4}  If $G$ is a triangulation of any surface and $\chi(G) \leq 4$, then $G$ has a Gr\"unbaum coloring.
\end{lemma}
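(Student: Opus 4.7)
The plan is to build the edge coloring directly from a proper vertex $4$-coloring by exploiting the group structure on the color set. Since $\chi(G)\le 4$, fix a proper vertex coloring $c\colon V(G)\to \mathbb{Z}_2\times \mathbb{Z}_2$, identifying the four colors with the elements of the Klein four-group. For every edge $uv\in E(G)$, define
\[
\phi(uv)\;=\;c(u)+c(v),
\]
where the sum is taken in $\mathbb{Z}_2\times\mathbb{Z}_2$. Because $c$ is proper, $c(u)\neq c(v)$, so $\phi(uv)\neq (0,0)$; thus $\phi$ takes values in the three-element set $\{(0,1),(1,0),(1,1)\}$, giving a $3$-edge-coloring.

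Next I would verify the Gr\"unbaum property on an arbitrary facial triangle $uvw$. Its edges receive the colors $c(u)+c(v)$, $c(v)+c(w)$, and $c(u)+c(w)$. If any two of these coincided, say $c(u)+c(v)=c(v)+c(w)$, then cancellation in $\mathbb{Z}_2\times\mathbb{Z}_2$ would force $c(u)=c(w)$, contradicting properness of $c$ on the edge $uw$. Hence the three edges of every face receive three distinct colors, and $\phi$ is a Gr\"unbaum coloring.

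The homomorphism perspective is simply that $c$ is a graph homomorphism $G\to K_4$, and the construction above is the pullback of the canonical Gr\"unbaum coloring of $K_4$ (in which the three perfect matchings of $K_4$ correspond to the three non-identity elements of $\mathbb{Z}_2\times\mathbb{Z}_2$). I do not anticipate a main obstacle here: the entire argument is a one-line computation in the Klein four-group, and it works on every surface because no topological input is used beyond the assumption that each face is a triangle.
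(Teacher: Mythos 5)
Your proof is correct and is essentially the same as the paper's: both pull back the standard (perfect-matching) edge $3$-coloring of $K_4$ along the homomorphism given by a proper vertex $4$-coloring. Your Klein four-group formulation just makes explicit the verification that the paper leaves implicit in the phrase ``since the edge coloring of $K_4$ is Gr\"unbaum, so is the edge coloring of $G$.''
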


\begin{proof} Fix a vertex $4$-coloring of $G$ and a standard edge 
$3$-coloring of $K_4$.  Fix a
vertex 4-coloring of $K_4$ by using the labels of the vertices as colors.
  Suppose $e = uv \in E(G), {\rm col}(u) = i$, and ${\rm col}(v) = j$;  color $e$ with col$(ij)$.
 Any $3$-cycle in $G$ corresponds to a triangle in $K_4$.  Since the edge coloring of $K_4$ is Gr\"unbaum, so is the edge coloring of $G$.
\end{proof}

\begin{lemma}\label{lem6reg}  Every $6$-regular toroidal graph has a Gr\"unbaum coloring.
\end{lemma}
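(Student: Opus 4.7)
The plan is to realize every $6$-regular toroidal triangulation as a quotient of the Euclidean triangular lattice and to pull back its natural $3$-edge-coloring. Let $T$ denote the standard triangular tessellation of $\mathbb{R}^2$, whose edges split into three parallel classes corresponding to the three edge directions. Coloring each edge by its parallel class yields a Gr\"unbaum coloring of $T$: every facial triangle has exactly one edge of each color.

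I would then identify any $6$-regular toroidal triangulation $G$ with a quotient $T/\Lambda$, where $\Lambda$ is a rank-two translation lattice. Since every vertex of $G$ has link a $6$-cycle, the universal cover of $G$ is combinatorially isomorphic to $T$, the unique $6$-regular triangulation of the plane. The deck transformations form a fixed-point-free subgroup of $\mathrm{Aut}(T)$ isomorphic to $\pi_1(\text{torus}) = \mathbb{Z}^2$, and they are orientation-preserving because the torus is orientable. The only fixed-point-free orientation-preserving automorphisms of $T$ are translations, so $\Lambda$ is a lattice of translations. (Equivalently, one may invoke Altshuler's classification of $6$-regular toroidal triangulations, which exhibits each such $G$ directly as a quotient of $T$ by a translation lattice.)

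Because translations preserve edge directions, the parallel-class coloring of $T$ is $\Lambda$-invariant and descends to a well-defined $3$-edge-coloring of $G = T/\Lambda$. Every facial triangle of $G$ lifts to a facial triangle of $T$, so it receives one edge of each color, giving the desired Gr\"unbaum coloring. The main obstacle is establishing the quotient structure of $G$; once that identification is in place, the coloring essentially writes itself.
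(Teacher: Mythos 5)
Your proposal is correct and matches the paper's approach: the paper invokes Altshuler's normal form (a rectangular grid with diagonals whose sides are identified, which is exactly a quotient of the triangular lattice by a rank-two translation lattice) and colors edges ``vertical,'' ``horizontal,'' and ``diagonal'' --- precisely your three parallel classes. The only step you elide is the observation, needed because the lemma is stated for $6$-regular toroidal \emph{graphs}, that Euler's formula forces such a graph to be a triangulation; otherwise your covering-space derivation of the quotient structure is just a self-contained substitute for the paper's citation of Altshuler's classification.
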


\begin{proof}  It is an immediate consequence of Euler's formula that a $6$-regular toroidal graph is a triangulation.  Altshuler has shown that every $6$-regular triangulation of the torus can be realized as a rectangular grid with diagonals in which the left and right sides are identified and the top and bottom sides are identified with a twist\,\cite{a}.  Every face in such a graph has one vertical edge, one horizontal edge, and one diagonal edge. The result follows immediately upon realizing that \lq\lq vertical", \lq\lq horizontal", and \lq\lq diagonal" are colors. 
\end{proof}

\begin{lemma}\label{lemext}   Suppose $H$ is a triangulation of a surface $S$ that has a Gr\"unbaum coloring.  If $G$ is a triangulation of $S$ that contains $H$ as a subgraph, i.e.
  $G$ is a refinement of $H$, then $G$ has a Gr\"unbaum coloring.
\end{lemma}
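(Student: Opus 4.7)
The plan is to extend the given Grünbaum coloring of $H$ face by face. Since $G$ refines $H$, each face $f$ of $H$ is bounded by a triangle of $H$, and the portion of $G$ lying in the closed region $\bar f$ is a planar near-triangulation whose outer face is that bounding triangle. It therefore suffices to prove the following claim: any Grünbaum coloring of the boundary triangle of such a near-triangulation extends to a Grünbaum edge coloring of the entire near-triangulation. Granting this claim, the extensions in distinct faces of $H$ can be carried out independently (their interiors share no edges), and they agree with the given coloring on each edge of $H$, producing a Grünbaum coloring of $G$.

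To prove the claim I would imitate the $K_4$-homomorphism argument in the proof of Lemma \ref{lem4}. By the Four Color Theorem, the planar near-triangulation admits a proper vertex $4$-coloring. The three boundary vertices of $f$ are pairwise adjacent, so they receive three distinct colors, which I may relabel as $1,2,3$ (with $4$ the remaining color). Next I build a Grünbaum edge coloring of $K_4$ on vertex set $\{1,2,3,4\}$ whose restriction to the triangle $\{1,2,3\}$ agrees with the prescribed coloring of $\partial f$. Such an extension is in fact forced: the color classes of any Grünbaum $3$-edge-coloring of $K_4$ are precisely its three perfect matchings $\{12,34\}$, $\{13,24\}$, $\{14,23\}$, so once the colors of the edges $12,13,23$ are fixed by the boundary, the colors of $34,24,14$ are determined, and a direct check verifies that all four triangular faces of $K_4$ then receive all three colors. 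Finally, I color each edge $uv$ of the near-triangulation by the $K_4$-color of the edge between $\mathrm{col}(u)$ and $\mathrm{col}(v)$.

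The resulting coloring is Grünbaum: every triangle of the near-triangulation has three pairwise-adjacent vertices, which receive three distinct colors under the proper $4$-coloring, so it maps to a triangular face of $K_4$ and inherits its three distinct edge colors; by construction the boundary triangle retains its prescribed coloring. The principal external input is the Four Color Theorem. The only step that looks as though it might fail is the compatibility between the prescribed boundary coloring and the $K_4$ framework, but the perfect-matching description of Grünbaum colorings of $K_4$ makes this immediate, so no serious obstacle arises. Note that no feature specific to the torus is used, so the lemma holds for triangulations of any surface.
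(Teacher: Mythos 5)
Your proof is correct and follows essentially the same route as the paper: extend the given coloring of $H$ face by face, observing that the part of $G$ inside each triangular face of $H$ is planar and invoking the Four Color Theorem there. The only cosmetic difference is how you match the boundary triangle — the paper takes an arbitrary Gr\"unbaum coloring of the planar piece and permutes its three colors to agree with $H$, whereas you build the matching coloring directly via the $K_4$-homomorphism of Lemma~\ref{lem4}; both work for the same reason, namely that the three boundary edges receive three distinct colors.
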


\begin{proof}  Pick a Gr\"unbaum coloring of $H$.  Transfer this edge coloring to $G$.  The edges of $G$ that are not yet colored live inside triangular faces of $H$.  For each such triangular face of $H$, consider the triangulation of the plane generated by the vertices on and inside this face.  Since this is a planar triangulation, it has a Gr\"unbaum coloring by the 4CT.  A permutation of the colors in this Gr\"unbaum coloring will make it agree with the Gr\"unbaum coloring of $H$.  Since this can occur independently for every face in $H$, $G$ will have a Gr\"unbaum coloring.
\end{proof}

\begin{lemma}\label{lemseven}  Every 7-chromatic toroidal triangulation has a Gr\"unbaum coloring.
\end{lemma}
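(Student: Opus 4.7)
The plan is to show that every such $G$ must contain a copy of $K_7$ which itself triangulates the torus, to exhibit an explicit Gr\"unbaum coloring of this $K_7$-triangulation, and then to invoke Lemma~\ref{lemext} to lift the coloring to $G$.

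First, I would appeal to the classical theorem of Dirac that every toroidal graph of chromatic number $7$ contains $K_7$ as a subgraph. For the embedding of this $K_7$ inherited from the embedding of $G$, Euler's formula gives $7 - 21 + f = 0$, so $f = 14$; since also $3f \leq 2|E(K_7)| = 42$, equality is forced and every face is a triangle. Hence the copy of $K_7$ sitting inside $G$ is itself a triangulation $H$ of the torus, so $G$ is a refinement of $H$ in the sense of Lemma~\ref{lemext}.

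Second, I would write down an explicit Gr\"unbaum coloring of $H$. Up to relabeling, the toroidal triangulation by $K_7$ is Heawood's: label $V(K_7) = \mathbb{Z}_7$ so that the faces of $H$ are $\{i, i+1, i+3\}$ and $\{i, i+2, i+3\}$ for each $i \in \mathbb{Z}_7$. Color each edge $\{i,j\}$ by the ``difference'' $\min(|i-j|,\, 7-|i-j|) \in \{1,2,3\}$. The three edges of $\{i, i+1, i+3\}$ then have differences $1, 2, 3$, and those of $\{i, i+2, i+3\}$ have differences $2, 1, 3$; in either case every face receives all three colors, so this is a Gr\"unbaum coloring of $H$. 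Applying Lemma~\ref{lemext} to $H \subseteq G$ then produces a Gr\"unbaum coloring of $G$.

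The main obstacle is the first step, which imports Dirac's nontrivial theorem that any $7$-chromatic toroidal graph contains $K_7$. Once that structural input is in hand, Euler's formula immediately upgrades the subgraph to a spanning triangulation, and the explicit Heawood coloring together with the earlier extension lemma finishes the argument routinely.
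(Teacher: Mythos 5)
Your proposal is correct and follows the paper's proof almost exactly: both arguments invoke Dirac's theorem to find a $K_7$ subgraph, observe that this $K_7$ triangulates the torus, and finish with Lemma~\ref{lemext}. The only difference is that where the paper cites Lemma~\ref{lem6reg} (every $6$-regular toroidal triangulation is an Altshuler grid, colorable by edge direction), you write down the same coloring explicitly via difference classes in $\mathbb{Z}_7$ on the Heawood embedding --- a harmless substitution, since the ``vertical/horizontal/diagonal'' colors of the grid are exactly your difference classes $1$, $2$, $3$.
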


\begin{proof} Any toroidal graph with chromatic number 7 must contain $K_7$ as a subgraph \cite{d}. 
Since $K_7$ is $6$-regular,  Lemmas  \ref{lem6reg} and \ref{lemext} finish the proof.
\end{proof}

\section{Six-Chromatic Toroidal Triangulations, I}\label{sec:res1}

The proof that $6$-chromatic triangulations of the torus have Gr\"unbaum colorings is, in outline, much like the proof of Lemma \ref{lemseven}.  However, the details are tricky and will occupy the remainder of this paper.  Our proof relies on the fact that critical $6$-chromatic toroidal graphs are classified.  Clearly $K_6$ is one such graph.  The others are described below.

Given graphs $G_1$ and $G_2$, the {\it join} of $G_1$ and $G_2$, denoted by $G_1 + G_2$, is the graph obtained by taking vertex-disjoint copies of $G_1$ and $G_2$ and adding every edge $uv$ where $u \in G_1$ and $v \in G_2$.  It is straightforward to see that if $G_1$ is a critical $r$-chromatic graph and $G_2$ is a critical $s$-chromatic graph, then $G_1 + G_2$ is a critical $(r+s)$-chromatic graph.
Thus $C_3 + C_5$ is a critical $6$-chromatic graph.

Let $H_7$ denote the graph obtained by applying the first step of the Haj${\rm \acute{o}}$s construction \cite{JT} to two vertex-disjoint copies of $K_4$. Since the Haj${\rm \acute{o}}$s construction preserves criticality, $H_7$ is a critical $4$-chromatic graph.  Consequently,
 $H_7 + K_2$ is a critical $6$-chromatic graph.  $H_7$ is shown in Figure \ref{figH7}.

\begin{figure} [h]
\begin{center} 
\includegraphics[scale=.5]{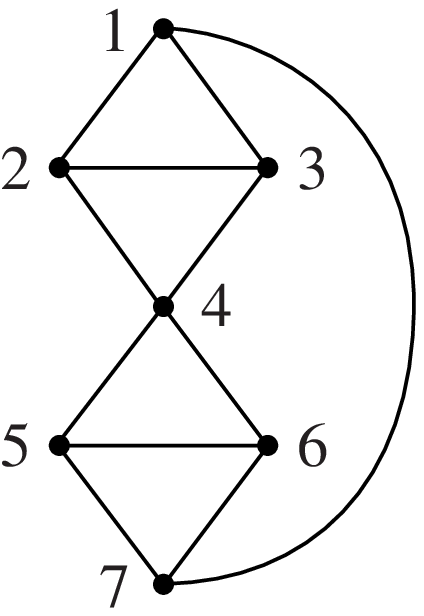}
\end{center}
\caption{$H_7$}
\label{figH7}
\end{figure}

More than thirty years ago, Albertson and Hutchinson \cite{ah4, ah3} discovered a $6$-regular, critical $6$-chromatic toroidal graph and named it $J$.  It was later realized that $J$ is isomorphic to $C_{11}^3$, the cube of the $11$-cycle.  In \cite{ah5} Albertson and Hutchinson gave a list of critical $6$-chromatic graphs including $K_6, C_3 + C_5$, and $H_7 + K_2$, but mistakenly believed the list was missing an additional critical graph on nine vertices.
Fourteen years later Thomassen proved the following.

\begin{theorem}[\cite{thom94,MT}]  If $G$ is a $6$-chromatic toroidal graph, then $G$ contains 
exactly one of $K_6, C_3 + C_5, H_7 + K_2$, or $C_{11}^3$.
\end{theorem}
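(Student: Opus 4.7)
The plan is to classify the 6-critical toroidal graphs, since every 6-chromatic graph $G$ contains a 6-critical subgraph $G'$ that inherits a toroidal embedding from $G$; once $G'$ is shown to be isomorphic to one of $K_6$, $C_3+C_5$, $H_7+K_2$, or $C_{11}^3$, the containment statement follows. So I would assume $G$ is 6-critical and embedded on the torus. Then $\delta(G)\ge 5$ by criticality, and because a simple toroidal graph satisfies $|E(G)|\le 3|V(G)|$ by Euler's formula, the average degree is at most $6$. Combined with Dirac--Kostochka--Yancey-type lower bounds on the edge count of 6-critical graphs, this squeezes the degree sequence into a narrow window just above $5$, so almost every vertex has degree $5$ or $6$.

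I would then split into cases on the clique number $\omega(G)$. A direct check shows the four target graphs have clique numbers $6$, $5$, $5$, and $4$, respectively, so $\omega(G)\ge 6$ immediately gives $K_6\subseteq G$. When $\omega(G)=5$, I would study the common neighborhood of a fixed $K_5$: since $G$ is 6-critical, any 5-coloring of $G-v$ must use all five colors on $N(v)$ for some $v$, and the structure of the ``link'' of a $K_5$ in the toroidal embedding must yield either the pendant triangle plus 5-cycle of $C_3+C_5$ or the Haj\'os gadget of $H_7+K_2$. When $\omega(G)\le 4$, I would use the Gallai tree theorem (in any $k$-critical graph, the subgraph induced by degree-$(k{-}1)$ vertices has only complete-graph or odd-cycle blocks) together with the edge-count squeeze to force $G$ to be $6$-regular; Altshuler's classification of $6$-regular toroidal triangulations \cite{a} (already used in Lemma~\ref{lem6reg}) then realizes $G$ as a twisted rectangular grid, and the additional requirement $\chi(G)=6$ pins it down as $C_{11}^3$.

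The main obstacle is the $\omega(G)=5$ case, where one must carry out a delicate finite local analysis around a $K_5$. The edge-width of the embedding is central: if it is large, locally $G$ looks planar, and $5$-critical planar neighborhood structure cannot accommodate $\chi(G)=6$; if it is small, a short non-contractible cycle cuts the torus into a cylinder and reduces the problem to a planar analysis of a $6$-critical chain. Handling both regimes uniformly---and along the way ruling out any fifth critical toroidal graph on at most eleven vertices---is the combinatorial heart of Thomassen's argument, and I would expect the bulk of the proof's work to live here.
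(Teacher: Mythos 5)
The paper offers no proof of this statement; it is quoted directly from Thomassen \cite{thom94} (see also \cite{MT}), so there is nothing internal to compare your argument against. Judged on its own terms, your proposal is a sensible roadmap --- reduce to the $6$-critical case, bound the degree sequence via Euler's formula, and split on the clique number --- but it is not a proof, because the cases carrying all of the content are left as declared intentions. In the case $\omega(G)=5$ you say the link of a $K_5$ ``must yield'' the $C_3+C_5$ or $H_7+K_2$ structure, and in the large-edge-width regime you appeal to the principle that locally planar toroidal graphs cannot be $6$-chromatic; the latter is itself a hard theorem of Thomassen (proved in the very same paper), and the former is precisely the finite case analysis whose outcome \emph{is} the theorem. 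Similarly, in the case $\omega(G)\le 4$ the claim that the Gallai-tree structure of the degree-$5$ vertices plus the edge-count squeeze forces $6$-regularity is asserted rather than argued: nothing you write rules out a $6$-critical toroidal graph with clique number at most $4$ and a nonempty set of degree-$5$ vertices, and that is exactly where a hypothetical fifth critical graph would hide. (The Kostochka--Yancey bound you invoke also postdates both Thomassen's proof and this paper, though that is a historical rather than a logical objection.)

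A second, independent gap: the theorem asserts that $G$ contains \emph{exactly one} of the four graphs, and your reduction to a $6$-critical subgraph can only ever deliver ``at least one.'' Even granting the full classification of $6$-critical toroidal graphs, you still owe an argument that no $6$-chromatic toroidal graph contains two of the listed graphs simultaneously (for instance both $K_6$ and $C_{11}^3$); this mutual-exclusivity claim is part of the statement and is nowhere addressed in your outline.
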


Given $G$, a $6$-chromatic triangulation of the torus,  we will produce an edge coloring of its critical $6$-chromatic subgraph that extends to a Gr\"unbaum coloring of all of $G$.  If a face of a particular embedding of the critical $6$-chromatic subgraph is $3$-sided, and the three boundary edges of this face are assigned different colors, then the interior of this triangular face has a Gr\"unbaum coloring of its interior.   The argument is identical to the proof of Lemma  \ref{lemext}.

\begin{lemma}  If $G$ is a $6$-chromatic toroidal graph that contains
 $C_{11}^3$, then $G$ has a 
Gr\"unbaum coloring.
\end{lemma}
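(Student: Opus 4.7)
The plan is to reduce immediately to Lemma \ref{lemext} with $H = C_{11}^3$. Since $C_{11}^3$ is $6$-regular and toroidal, Lemma \ref{lem6reg} already tells us two useful facts about it: it is itself a triangulation of the torus (by the Euler-formula argument used there), and it admits a Gr\"unbaum coloring using the ``vertical / horizontal / diagonal'' scheme coming from Altshuler's grid realization. So all the work lies in showing that the triangulation $G$, which contains $C_{11}^3$ as a subgraph, is a refinement of $C_{11}^3$'s toroidal triangulation in the sense required by Lemma \ref{lemext}.

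First I would dispose of the trivial case. A toroidal triangulation on $n$ vertices has exactly $3n$ edges, so if $V(G) = V(C_{11}^3)$ then $|E(G)| = 33 = |E(C_{11}^3)|$, forcing $G = C_{11}^3$ and the result is immediate from Lemma \ref{lem6reg}. Thus we may assume $V(G) \supsetneq V(C_{11}^3)$, and the additional vertices of $G$ must lie in the complement (in the torus) of the edges of $C_{11}^3$.

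Next I would argue that the embedding of $C_{11}^3$ inherited from $G$ is cellular and in fact triangulates the torus. Here the key inputs are: each vertex of $C_{11}^3$ already has all six of its incident edges coming from $C_{11}^3$, the rotation system at such a vertex in $G$ induces a rotation at it in $C_{11}^3$, and any face of the subgraph embedding must contain only refinement material from $G$. Since $G$ itself is a toroidal triangulation, counting degrees and collapsing the refinements shows the inherited sub-embedding of $C_{11}^3$ has Euler characteristic zero; with $V = 11$, $E = 33$ cellularity then gives $F = 22$, and $3F \leq 2E$ with equality forces every face to be a triangle. By Altshuler's classification of $6$-regular toroidal triangulations (cited in Lemma \ref{lem6reg}), this embedding agrees up to homeomorphism with the standalone embedding of $C_{11}^3$ on which the Gr\"unbaum coloring from Lemma \ref{lem6reg} was built.

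With this in hand, $G$ is literally a refinement of $H = C_{11}^3$, so Lemma \ref{lemext} applied to the Gr\"unbaum coloring of $C_{11}^3$ extends face by face (via the 4CT on each triangular face of $H$) to a Gr\"unbaum coloring of $G$. The main obstacle is the middle paragraph: verifying cellularity and uniqueness of the embedding of $C_{11}^3$ inside $G$. I expect this either to be handled by a short topological argument using the $6$-regularity of $C_{11}^3$ inside a triangulation, or by direct appeal to the Altshuler-style normal form, ruling out the pathology of a face of the sub-embedding wrapping non-trivially around a handle.
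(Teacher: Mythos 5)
Your proposal takes essentially the same route as the paper, whose entire proof is that the result ``follows immediately from Lemmas \ref{lem6reg} and \ref{lemext}.'' The extra paragraph you devote to checking that the induced embedding of $C_{11}^3$ inside $G$ is cellular and triangular (so that ``contains as a subgraph'' really means ``is a refinement of'') is a detail the paper treats as immediate, and your Euler-characteristic accounting is a reasonable way to supply it.
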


\begin{proof}  This follows immediately from Lemmas \ref{lem6reg} and \ref{lemext}.  
\end{proof}

If $G$ is a $6$-chromatic toroidal graph that does not contain $C_{11}^3$, the proof that $G$ has a Gr\"unbaum coloring is complicated by the fact that the critical $6$-chromatic subgraph does not embed as a triangulation of the torus.  


\begin{lemma}\label{H7embed} $H_7 + K_2$ is uniquely embeddable in the torus.
\end{lemma}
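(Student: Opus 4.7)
The plan is to argue that any $2$-cell embedding of $H_7 + K_2$ in the torus has the same face structure up to an automorphism of $H_7 + K_2$ and orientation reversal. With $V=9$ and $E=26$, Euler's formula forces any toroidal embedding to have $F = 17$ faces of total boundary length $52$, so most faces must be triangles. A direct enumeration of triangles in $H_7 + K_2$ will also play a role: there are $4$ triangles in $H_7$ alone, $11$ triangles of the form $pyz$ for $yz \in E(H_7)$, $11$ of the form $qyz$, and $7$ of the form $pqz$ for $z \in V(H_7)$, for $33$ triangles in total competing for the $17$ face slots.

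I would exploit the apex structure. Let $p, q$ denote the two vertices of the $K_2$ factor; let $u$ be the identified vertex of the two $K_4$'s of $H_7$; and write the remaining vertices of $H_7$ as $\{v_1, a, b\}$ (from the first $K_4$) and $\{v_2, c, d\}$ (from the second), where the Hajos step removed the edges $uv_1, uv_2$ and added $v_1v_2$. Then both $\{p, q, u, a, b\}$ and $\{p, q, u, c, d\}$ induce copies of $K_5$, and the two $K_5$'s share the triangle $\{u, p, q\}$. I would first invoke the known classification of toroidal embeddings of $K_5$ to pin down, for each of these two subgraphs, the cyclic order of neighbors around $p$ and around $q$ up to finitely many choices.

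The next step is to combine the two $K_5$-rotations at $p$ (respectively $q$) into a single consistent rotation system on $H_7 + K_2$. Compatibility along the common triangle $upq$, together with the fact that $p$ and $q$ each have only eight neighbors, cuts this down drastically; I expect only a small number of candidate rotations to survive, all equivalent under an automorphism of $H_7 + K_2$ (for instance, the involution swapping $v_1 \leftrightarrow v_2$, $\{a,b\} \leftrightarrow \{c,d\}$, and $p \leftrightarrow q$) or under reflection. Once the rotations at $p$ and $q$ are fixed, the rotations at the remaining vertices are tightly forced by the requirement that face corners match and that the $33$ triangles fill the $17$ face slots consistently.

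The main obstacle is precisely this case analysis of compatibly merging the two $K_5$-rotations at each apex, ruling out twist-type modifications that could yield inequivalent embeddings. A secondary, smaller obstacle is checking that the single remaining edge $v_1v_2$ has a unique placement in the partial embedding once all rotations on $\{p,q,u,v_1,a,b,c,d,v_2\}\setminus\{v_1v_2\}$ are determined. I would conclude by exhibiting an explicit toroidal rotation system of $H_7 + K_2$ both to confirm existence and to provide the template against which the uniqueness argument is verified.
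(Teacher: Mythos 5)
Your strategy is genuinely different from the paper's. The paper follows Gagarin--Kocay--Neilson: it fixes a spanning $\theta$-subgraph of $H_7+K_2$, runs through the seven possible embeddings of a $\theta$-graph (three planar, three cylindrical, one toroidal $2$-cell), and shows that forced edge-additions kill all but one case. You instead exploit the apex structure: your arithmetic is right ($V=9$, $E=26$, hence $F=17$ with exactly one quadrilateral face and $16$ triangles, and $33$ candidate triangles), the two induced copies of $K_5$ on $\{p,q,u,a,b\}$ and $\{p,q,u,c,d\}$ do exist and share the triangle $upq$, and since $K_5$ is nonplanar every toroidal embedding of it is $2$-cell, so the known (finite) classification of toroidal $K_5$-embeddings legitimately constrains the induced rotations. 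What your route buys is a more structural starting point (rotation constraints coming from well-understood dense subgraphs rather than from a sparse $\theta$-frame); what the paper's route buys is a smaller top-level case split ($7$ cases versus the rather large number of rotation systems realizing the six toroidal embeddings of $K_5$, times the ways of interleaving $v_1,v_2$ among the eight neighbors of each apex). Two cautions: first, the ``compatibility along the common triangle $upq$'' is weaker than you suggest, since the two $K_5$'s overlap in only two neighbors of $p$ (namely $q$ and $u$), and a cyclic order on two elements carries no information -- the real constraint is just that both restrictions must be admissible $K_5$-rotations, so the merge analysis is larger than your phrasing implies; second, and more importantly, that merge analysis, the forcing of the rotations at the remaining vertices, and the verification that all survivors are related by the automorphism you name are precisely the content of the lemma, and you have outlined rather than executed them. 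To be fair, the published proof also exhibits only sample cases and asserts that ``the other cases are more involved but follow similar strategy,'' so both arguments stand or fall on a finite check that is left to the reader; yours would be acceptable on the same terms once the existence half (an explicit toroidal rotation system, which you do promise) is written down.
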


\begin{proof}  Gagarin 
 et.\,al.\,have shown that all embeddings of a graph $G$  on the torus can be found by considering a spanning $\theta$-subgraph of $G$ \cite{Kocay}. (A $\theta$-graph
  is composed of two vertices of degree 3 joined by three internally disjoint paths.) 
There are three labeled embeddings of a $\theta$-graph on the plane, each with a different path in the middle. Similarly, there are three labeled embeddings of a $\theta$-graph on a cylinder. There is only one labeled 2-cell embedding of a $\theta$-graph on the torus. These seven cases correspond to possible
 toroidal embeddings of a spanning $\theta$-subgraph of $G$.  In each case, the remaining edges of $G$ can be added one by one as forced by the embedding or to make appropriately sized faces. In all but one case, this process leads to a contradiction. 


Denote the vertices of $H_7$ by $\{1, 2, \dots  , 7\}$ as in Figure \ref{figH7} and the vertices of $K_2$ by $\{a, b\}$. Choose the spanning $\theta$-subgraph $G_\theta$ to have $\{a, b\}$ as its vertices of degree 3.  Let the three paths be $a4b$, $a213b$, and $a 5 7 6 b$. Euler's formula implies that one face will be a quadrilateral and the rest triangles.  We provide a few examples of the process of ruling out potential embeddings.

{\bf Case I.} Embed $G_\theta$ in the plane with $a576b$ as the middle path. The placement of the edges $45$, $46$ and $17$ is now forced. The edge $5b$ cannot be placed without crossings.

{\bf Case II.} Embed $G_\theta$ on the cylinder with paths $a213b$ and $a 4b$ homotopic. Consider the possible placements of the edge $ab$.  If it is also placed homotopic to the paths $a21 3 b$ and $a 4b$ then either one of the edges $17$ or  $46$ is immediately blocked or the edges $24$ and $34$ are forced and the edge $17$ cannot be placed. If $ab$ is not homotopic, then there are 2 other cases and several subcases, all of which lead to contradictions.

{\bf Case III.} Embed $G_\theta$ in the plane with $a4 b$ as the middle path. The  placement of the edges incident to $4$ is forced. This produces two quadrilateral faces, $(1, 3, 4, 2)$ and $(4, 5, 7, 6)$. There is only one quadrilateral face in the embedding of $G$, so at least one of the edges $23$ or $56$ must be placed in a quadrilateral. Without loss of generality, assume $23$ divides the quadrilateral $(1, 3, 4, 2)$. The remaining edges are now added to create triangular faces. This produces the unique embedding.  The other cases are  more involved but follow similar strategy. \end{proof}

\begin{lemma} \label{TorusH7} If $G$ is a triangulation of the torus that contains $H_7 + K_2$, then $G$ has a 
Gr\"unbaum coloring.
\end{lemma}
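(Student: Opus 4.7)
The plan follows the template of Lemma \ref{lemseven}: produce a Grünbaum 3-edge-coloring of $H_7+K_2$ in its unique toroidal embedding (given by Lemma \ref{H7embed}) and extend it face-by-face to $G$ using the Four Color Theorem as in Lemma \ref{lemext}. Applying Euler's formula to $H_7+K_2$ on the torus gives $v-e+f=9-26+f=0$, so $f=17$; together with $\sum_F|F|=2e=52$ and $|F|\ge 3$ this forces the embedding to have exactly sixteen triangular faces and one quadrilateral face, whose precise list can be extracted from Case III of the proof of Lemma \ref{H7embed}.

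The central step is to exhibit, by a direct finite search over the 26 edges, a 3-edge-coloring of $H_7+K_2$ satisfying both (i) every triangular face sees all three colors, and (ii) the cyclic sequence of colors around the quadrilateral boundary is of the form $(\alpha,\alpha,\beta,\beta)$ or $(\alpha,\beta,\alpha,\beta)$, so that each color appears an even number of times. Identifying the three edge colors with the nonidentity elements of the Klein four-group $\mathbb{Z}_2\times\mathbb{Z}_2$, condition (ii) is the parity condition that the four boundary colors sum to zero, which is exactly the condition that they arise as $\chi(u)+\chi(v)$ from some proper vertex 4-coloring $\chi$ of the four quadrilateral corners under the $K_4$ edge-coloring recipe of Lemma \ref{lem4}.

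With such a coloring in hand, extension to $G$ proceeds face-by-face. Each triangular face of $H_7+K_2$ bounds a planar near-triangulation in $G$ whose outer boundary is already properly 3-colored, so the 4CT combined with Lemma \ref{lemext}'s argument fills in a matching Grünbaum coloring inside. For the unique quadrilateral face, condition (ii) lifts its boundary edge coloring to a proper vertex 4-coloring of the four corners; this corner 4-coloring extends to a proper 4-coloring of the entire planar near-triangulation filling the quadrilateral in $G$ (by gluing a triangulated patch to the outside of the 4-cycle to form a planar sphere triangulation, invoking the 4CT, and performing a Kempe-chain swap if needed to align the prescribed corner colors), after which Lemma \ref{lem4}'s recipe produces a Grünbaum edge coloring agreeing with the prescribed boundary. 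The main obstacle is the explicit finite search in (i)-(ii): one must exhibit a single 3-edge-coloring of $H_7+K_2$ that is Grünbaum on all sixteen triangular faces while simultaneously satisfying the Klein-group parity condition on the unique quadrilateral face, and then confirm that the resulting corner configuration is extendable via the capping argument above — a combinatorial case check driven entirely by the specific face list of the unique embedding from Lemma \ref{H7embed}.
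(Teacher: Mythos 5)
Your setup (unique embedding, Euler count giving sixteen triangles and one quadrilateral $\Gamma$, filling the triangular faces via the 4CT as in Lemma \ref{lemext}) matches the paper, but the way you handle the quadrilateral face has a genuine gap, and the argument runs in the wrong direction. The interior of $\Gamma$ in $G$ is an \emph{arbitrary} triangulated disk, and you cannot push a boundary coloring chosen in advance on the $H_7+K_2$ side into it. Concretely, your claim that the Klein-group lift of the boundary edge coloring to a proper $4$-coloring of the four corners ``extends to a proper 4-coloring of the entire planar near-triangulation \dots performing a Kempe-chain swap if needed to align the prescribed corner colors'' is false: prescribing the colors of four vertices of a planar triangulation and extending is not something the 4CT plus Kempe chains delivers (this is essentially the failure mode of Kempe's original argument), and indeed a chord of $\Gamma$ inside the disk already rules out some of the corner colorings compatible with your parity condition. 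Even at the level of edge colorings, Lemmas \ref{milesnate1}--\ref{lemmilesnate2} (and the type 1/2/3 trichotomy used in Lemma \ref{lem444b}) show that a given triangulated square admits only one of the sets $\{$A, B$_1\}$, $\{$A, B$_2\}$, or $\{$C, B$_1$, B$_2\}$ as boundary types, so no \emph{single} coloring of $H_7+K_2$, whatever its boundary type on $\Gamma$, can match every possible interior.

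The paper reverses the logic: it caps the triangulated interior of $\Gamma$ with a wheel vertex $u$ in the plane, applies the 4CT to the resulting triangulation $\Gamma^*$, and reads off from the edges at $u$ that the interior admits a partial Gr\"unbaum coloring whose boundary on $\Gamma$ is of type C, B$_1$, or B$_2$. It then exhibits \emph{three} partial Gr\"unbaum colorings of $H_7+K_2$, one for each of these boundary types (Figure \ref{k2h7c3c5col}), and glues whichever one matches. To repair your proof you would need to replace your single coloring and inward-extension step with this match-the-interior argument, i.e.\ supply a coloring of $H_7+K_2$ for each boundary type the interior might force.
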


\begin{proof}  The unique embedding of $H_7 + K_2$ contains one quadrilateral face, say $\Gamma$.  We construct a triangulation of the plane by cutting $\Gamma$ and its interior out of the torus and placing it in the plane.  We then create a new vertex, say $u$, in the unbounded face and add edges from $u$ to each vertex of $\Gamma$.  The resulting graph $\Gamma^*$ is a triangulation of the plane and consequently has a Gr\"unbaum coloring.  In such a coloring the edges incident with $u$ can use either two or three colors.  If the edges incident with $u$ use only two colors, then these colors must alternate.  Consequently, the colors on the edges of $\Gamma$ must all be the same.  If there are three colors used on the edges incident with $u$, then one color must be repeated on opposite edges, and the other two colors used on one edge each.  In this case the colors on the edges of $\Gamma$ must occur in consecutive pairs.  This can happen in only two different ways up to color permutation.   A priori we don't know which of these colorings of the edges incident with $u$ and bounding $\Gamma$ extend to the interior of $\Gamma^*$.   At least one such coloring must extend since $\Gamma^*$ has a Gr\"unbaum coloring.  We finish the proof by exhibiting in Figure \ref{k2h7c3c5col} three different edge colorings of $H_7 + K_2$,  one for each possible edge coloring of the edges in $\Gamma$.  \end{proof}

\begin{lemma} $C_3 + C_5$ is uniquely embeddable in the torus.
\end{lemma}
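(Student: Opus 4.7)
The plan is to apply the same technique used in Lemma \ref{H7embed}: enumerate the possible toroidal embeddings of $C_3 + C_5$ via the embeddings of a spanning $\theta$-subgraph, and show that at most one extends. Label the vertices so that $C_3 = \{a,b,c\}$ and $C_5 = 12345$. Then $|V| = 8$ and $|E| = 3+5+15 = 23$, so Euler's formula on the torus forces $F = 15$, and the relation $2E - 3F = 1$ implies that every toroidal embedding of $C_3 + C_5$ has exactly one quadrilateral face and fourteen triangular faces---the same face-size pattern as for $H_7 + K_2$.

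I would choose the spanning $\theta$-subgraph $G_\theta$ whose degree-$3$ vertices are $a$ and $b$, with three internally disjoint $a$-$b$ paths: the edge $ab$ itself, the path $acb$, and the path $a12345b$. These together cover all eight vertices and use nine edges, leaving fourteen edges to be added---the $C_5$-edge $51$ and thirteen join edges. By the method discussed in Lemma \ref{H7embed}, $G_\theta$ has exactly seven labelled 2-cell embeddings on surfaces of genus at most one (three planar, three cylindrical, one toroidal), and any toroidal embedding of $C_3 + C_5$ restricts to one of them.

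For each of the seven embeddings of $G_\theta$, I would add the remaining fourteen edges one at a time, using the constraint that all faces except one must be triangles. The key rigidity comes from the vertex $c$: since its only $G_\theta$-edges are $ac$ and $cb$, all five join edges $c1, \ldots, c5$ still have to be drawn, yet $c$ lies on the boundary of only the two faces of $G_\theta$ that are incident with the path $acb$. In each planar and cylindrical configuration at most one of those two faces has any $C_5$-vertices on its boundary, so some edge $cj$ is forced to cross another edge or to enlarge a face beyond a triangle---arguments in the spirit of Cases I and II in the proof of Lemma \ref{H7embed}---ruling out all six non-toroidal configurations. In the toroidal case, after fixing the quadrilateral face, the edges $c1, \ldots, c5$, then $51$, and finally the remaining eight join edges are forced in turn, producing a single embedding.

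The main obstacle is the case analysis: in several of the cylindrical configurations the edge $51$ admits more than one homotopy class, and in each such subcase one must track the cascade of forced edges until a crossing or face-size violation appears; one must also check, in the toroidal case, that different orderings of the forced placements do not quietly yield combinatorially inequivalent embeddings. As in the proof of Lemma \ref{H7embed}, the argument is delicate but routine.
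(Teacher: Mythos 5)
Your overall strategy is exactly the paper's: fix a spanning $\theta$-subgraph, enumerate its seven labelled embeddings (three planar, three cylindrical, one toroidal), use the Euler count to see that exactly one face of the final embedding is a quadrilateral, and show that only the toroidal configuration extends. The paper uses a different $\theta$-subgraph (degree-$3$ vertices $a$ and $c$, with paths $abc$, $a123c$, $a54c$) and, like you, exhibits only sample cases, so in outline the two arguments are comparable sketches.

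However, the one concrete mechanism you offer for ruling out the six non-toroidal configurations does not work as stated. You claim that the five join edges $c1,\dots,c5$ cannot be accommodated because $c$ lies on only the two faces of $G_\theta$ incident with the path $acb$, at most one of which meets $C_5$. But in each planar embedding of your $G_\theta$, one of those two faces is bounded by $acb$ together with $a12345b$ and hence has all of $1,\dots,5$ on its boundary; the fan $c1,\dots,c5$ embeds there as non-crossing chords, cutting that face entirely into triangles. No contradiction arises at $c$. The actual obstruction in the planar cases is a capacity count: the three faces of the planar $\theta$-graph are a triangle ($ab$ with $acb$), a heptagon ($ab$ with $a12345b$), and an octagon ($acb$ with $a12345b$), which can absorb at most $0+4+5=9$ non-crossing chords in total, whereas $14$ edges remain to be placed. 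This is the same flavor of argument the paper invokes (``induce a $5$-sided face''), and it disposes of the planar cases more cleanly than the rigidity claim at $c$; the cylindrical cases still require the homotopy analysis you describe for the edge $51$, since the annular face changes the count. Since the vertex-$c$ claim is the only explicit step you give for six of the seven cases, you should replace it; the rest of your outline (the toroidal case extending uniquely by forced placements) matches the paper's and is fine.
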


\begin{proof} Proceeding as in Lemma \ref{H7embed} we consider a spanning $\theta$-subgraph of $C_3 + C_5$.
Denote the vertices of $C_3$ by $\{a, b, c\}$ and those of $C_5$ by $\{1, 2, 3, 4, 5\}$.
Choose the spanning $\theta$-subgraph $G_\theta$ to have the vertices  $\{ a, c\}$ and the 
three paths $ab c$,  $a 1 23c$, $a 5 4 c$.  We will again have exactly one quadrilateral face and the rest triangles, and again we provide sample cases.

{\bf Cases I, II.} The two potential planar embeddings with either path of $C_5$ as the middle path of $G_\theta$ each induce a 5-sided face and are therefore impossible. 

{\bf Case III.} The 2-cell embedding of $G_\theta$ on the torus extends uniquely to an embedding of  $C_3 + C_5$.\end{proof}

\begin{lemma} If $G$ is a triangulation of the torus that contains $C_3 + C_5$, then $G$ has a 
Gr\"unbaum coloring.
\end{lemma}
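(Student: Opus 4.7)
The argument parallels that of Lemma \ref{TorusH7}. By the preceding lemma, $C_3+C_5$ has a unique embedding in the torus. Since $|V|=8$ and $|E|=3+5+15=23$, Euler's formula gives $F=15$, and solving $3t+4q=46$ with $t+q=15$ forces $t=14$ and $q=1$. Hence this embedding has exactly one non-triangular face, a quadrilateral $\Gamma$.

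Following the strategy of Lemma \ref{TorusH7}, I would cut $\Gamma$ and its interior out of the torus and place the result in the plane, then introduce a new vertex $u$ in the unbounded face and join $u$ to each vertex of $\Gamma$. The resulting graph $\Gamma^{*}$ is a planar triangulation, so by Lemma \ref{lem4} it has a Gr\"unbaum coloring. The analysis in the proof of Lemma \ref{TorusH7} applies verbatim to the four triangles incident with $u$: the colors of the four edges of $\Gamma$ fall, up to permutation, into exactly three patterns---all four edges the same color, or two consecutive same-color pairs arranged in one of two distinct rotational positions. At least one such pattern must be realized as the boundary of $\Gamma$ by some Gr\"unbaum coloring of $\Gamma^{*}$.

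To finish, I would exhibit in Figure \ref{k2h7c3c5col} three Gr\"unbaum edge colorings of $C_3+C_5$, one realizing each of the three possible boundary patterns on $\Gamma$. Choosing the coloring of $C_3+C_5$ whose boundary pattern matches the one produced by $\Gamma^{*}$, permuting colors if needed so that the two agree on $\Gamma$, and gluing in the interior of $\Gamma$ from the coloring of $\Gamma^{*}$, yields a Gr\"unbaum coloring of $C_3+C_5$ together with the refined interior of $\Gamma$. Lemma \ref{lemext} then extends this to all remaining triangular faces of $G$.

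The main obstacle, and essentially the only real work, lies in producing the three explicit colorings of the embedded $C_3+C_5$. The embedded symmetry of $C_3+C_5$ is smaller than its abstract automorphism group, so realizability of each boundary pattern must be checked by hand rather than inferred from symmetry; concretely, with the unique embedding in hand one identifies the four edges bounding $\Gamma$ and then searches among the remaining $19$ edges for an assignment making each of the $14$ triangular faces rainbow and the boundary of $\Gamma$ match the prescribed pattern. Since three patterns and a finite search are involved, the task reduces to verifying the three figures.
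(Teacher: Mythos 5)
Your proposal is correct and follows the paper's own argument essentially verbatim: the paper likewise reduces to the unique embedding with a single quadrilateral face, reuses the cut-and-cap analysis from Lemma \ref{TorusH7}, and finishes by exhibiting the three required colorings of $C_3+C_5$ in Figure \ref{k2h7c3c5col}. Your explicit Euler-formula check that the non-triangular face is a quadrilateral is a small addition the paper leaves implicit.
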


\begin{proof}  Since $C_3 + C_5$ has a unique embedding with exactly one non-triangular face that happens to be $4$-sided,  the proof follows that of Lemma \ref{TorusH7}.  The three different edge colorings  of $C_3 + C_5$ that we need to complete the proof are  presented in Figure \ref{k2h7c3c5col}.
\end{proof}

 \begin{figure}[h]
\begin{center}
\includegraphics[scale=.3]{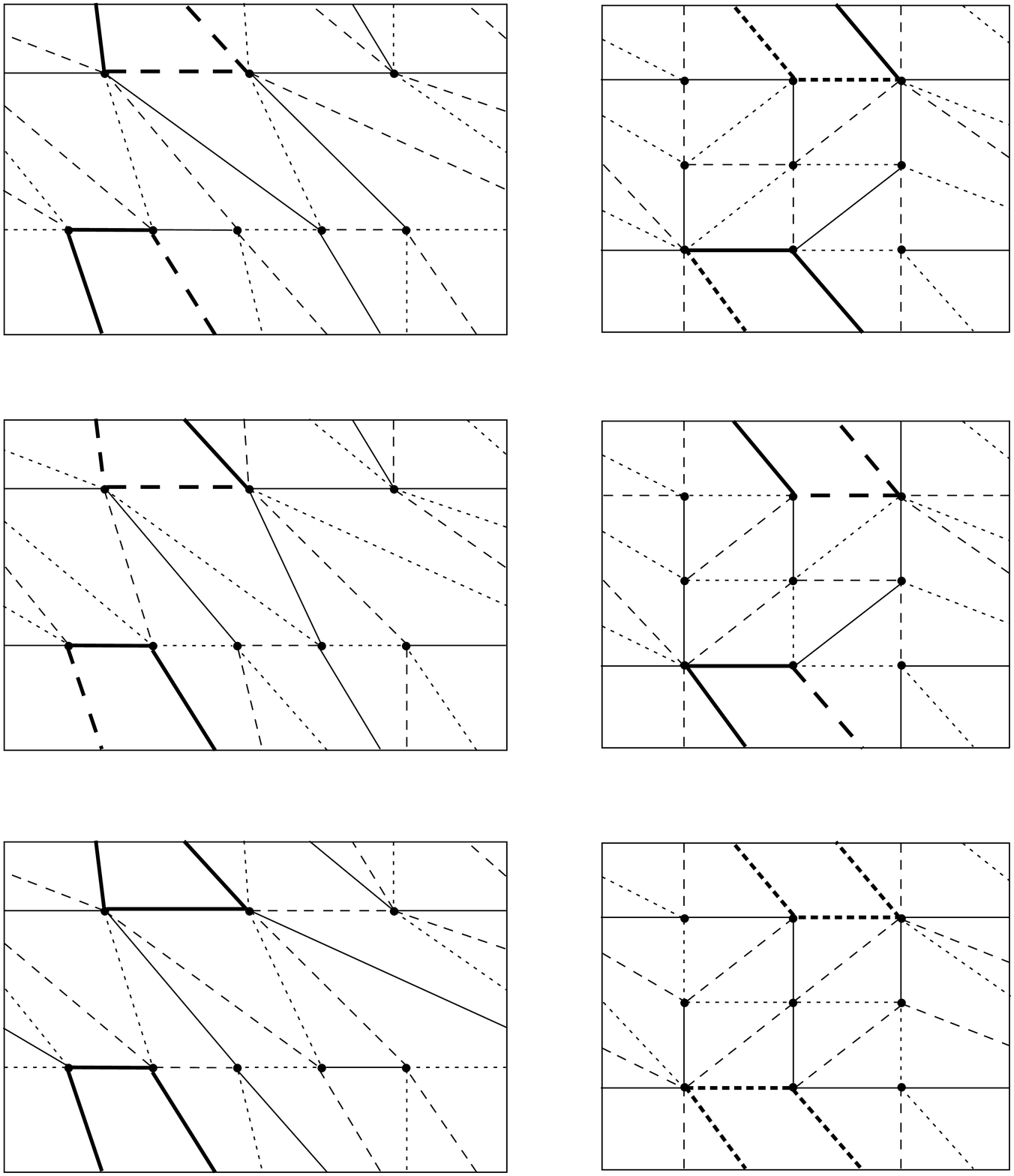}
\caption{ Colorings of $C_3+C_5$ and $H_7 + K_2$.
}
\label{k2h7c3c5col}
\end{center}
\end{figure}

\section{Six-Chromatic Toroidal Triangulations, II}\label{sec:res2}

$K_6$ has 4 embeddings (three shown in Figure \ref{k6es}, and the last in Figure \ref{6col}), which are identified by their non-triangular faces: $(4,4,4)_A$ has three squares forming a laddered cylinder, $(4,4,4)_B$ has three squares forming a non-laddered cylinder, $(5,4)$ has one pentagonal and one square face, and $(6)$ has one hexagon. 


\begin{figure}[h]
\begin{center}
\includegraphics[scale=.5]{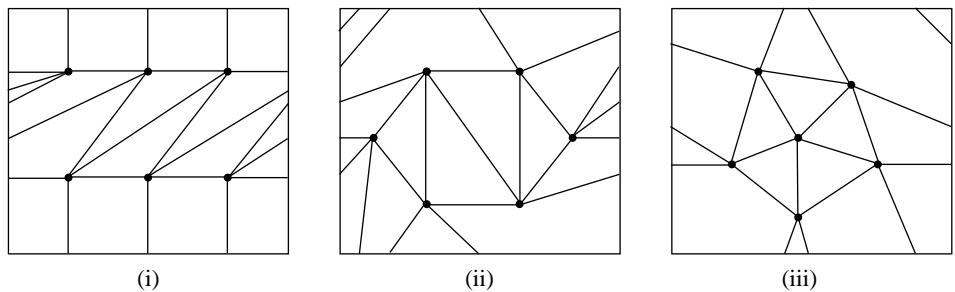}
\caption{Three toroidal embeddings of $K_6$: (i) $(4,4,4)_A$, (ii) $(4,4,4)_B$, and (iii) $(5,4)$.}
\label{k6es}
\end{center}
\end{figure}



Suppose $G$ is an embedded graph.  A $3$-coloring of a subset of the edges of $G$ is called a {\it partial Gr\"unbaum coloring} if the edges of every facial triangle are assigned three distinct colors.
Note that if we have a (partial) Gr\"unbaum coloring of an embedded graph $G$, then we can use Kempe chain arguments to produce alternative (partial) Gr\"unbaum colorings.  
Formally, we consider $G^\ast$, the dual embedding of $G$ on the torus.  Note that $G^\ast$ is a cubic graph.  If a particular edge, say $e$, is colored $p$, then a $p-t$ {\it Kempe chain} at $e$ is the set of edges that are colored either $p$ or $t$ and correspond with the standard edge Kempe chain in $G^\ast$ containing $e$.  A $p-t$ {\it Kempe change} at $e$ switches the colors in the $p-t$ Kempe chain at $e$. 


\begin{lemma}[\cite{eh}] \label{milesnate1} \rm 
  Suppose $\Gamma$ is a separating square in $G$, a  triangulation of the plane.  In any Gr\"unbaum coloring of $G$ each color appears an even number of times on $\Gamma$.  
\end{lemma}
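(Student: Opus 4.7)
The plan is to argue by a single double-count applied to each color separately, combined with a parity computation for the number of triangles enclosed by the separating square.

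Fix a Gr\"unbaum coloring of $G$ and a color $c\in\{1,2,3\}$. Since $\Gamma$ is separating in the plane triangulation $G$, it bounds a triangulated disk $D$ on one side; by symmetry I may work with either side, so I pick one arbitrarily. Let $T$ be the number of triangular faces of $G$ contained in $D$, let $n_c$ be the number of edges of $\Gamma$ colored $c$, and let $m_c$ be the number of edges strictly interior to $D$ colored $c$. The Gr\"unbaum property is equivalent to saying that every triangle has exactly one edge of each color, so the total number of (triangle, color-$c$ edge) incidence pairs inside $D$ equals $T$. Counting the same quantity from the edge side, each strictly interior edge of $D$ is incident to two triangles of $D$ and each edge of $\Gamma$ to exactly one, so $T = 2m_c + n_c$. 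In particular, $n_c \equiv T \pmod{2}$.

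It remains only to show that $T$ is even. Since $D$ is a triangulated disk whose boundary is a $4$-cycle with $v$ vertices of $G$ strictly in its interior, a direct application of Euler's formula yields $T = 2v+2$. (One can instead see this by induction on $v$: the minimal such disk is the two-triangle triangulation with $v=0$, and inserting an interior vertex inside any triangle increases $T$ by exactly two.) Hence $T$ is even, which in combination with $n_c \equiv T \pmod{2}$ forces $n_c$ to be even for each of the three colors.

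I do not foresee any genuine obstacle: the only substantive content is the parity of $T$, which is immediate from Euler, and the double-count identity $T = 2m_c + n_c$, where the one subtlety worth flagging is that boundary edges of $\Gamma$ lie in only one triangle of $D$ while all other edges of $D$ lie in two. (Alternatively, one could dualize and present the same observation as the fact that a perfect matching in a cubic plane graph meets every edge cut in a number of edges having the parity of either side of the cut, but the primal counting argument above seems cleanest.)
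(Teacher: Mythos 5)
Your argument is correct, but it is a genuinely different proof from the one in the paper. The paper argues via Kempe chains: in a Gr\"unbaum coloring of a triangulation every $p$--$t$ Kempe chain is a cycle, so the chains pair up the edges of $\Gamma$ within each two-color class, and a short parity contradiction finishes the job. You instead double-count (triangle, color-$c$ edge) incidences in the disk $D$ bounded by $\Gamma$, getting $T = 2m_c + n_c$, and then compute $T = 2v+2$ from Euler's formula. Both are sound. Your route is more elementary (it does not need the fact that Kempe chains close up into cycles) and it generalizes instantly: for a separating $k$-gon the same computation gives $T = 2v + k - 2 \equiv k \pmod 2$, which is exactly Lemma \ref{hannahlem}, whereas the paper only asserts that its argument ``can be easily modified.'' What the paper's approach buys is that the Kempe-chain machinery it sets up here is reused heavily afterward (Lemma \ref{lemmilesnate2} and the $(5,4)$ and $(6)$ cases), so proving Lemma \ref{milesnate1} by Kempe chains introduces that toolkit at no extra cost; your dual reformulation (a perfect matching of a cubic plane graph meets an edge cut with the parity of either side) is the same observation in matching language. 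One small caveat: your parenthetical induction is not by itself complete, since not every triangulated disk is obtained from the two-triangle disk by stacking interior vertices (diagonal flips are also needed, though they preserve $T$ and $v$); the Euler computation you give first is the argument to rely on.
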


\begin{proof} In any Gr\"unbaum coloring of a triangulation of any surface every Kempe chain is a cycle.
   Thus each edge of $\Gamma$ is in a Kempe chain with some other edge of $\Gamma$.  Since there are three colors used on four edges of $\Gamma$, at least one color must appear an even number of times.  Call this color $t$.  Suppose color $p$ appears an odd number of times on $\Gamma$.  Consider the $t-p$ Kempe chains.  They are disjoint, and each contains an even number of edges of $\Gamma$.  However, the total number of edges in $\Gamma$ colored either $t$ or $p$ is odd, a contradiction. 
\end{proof}

\noindent The preceding argument can be easily modified to give the following.

\begin{lemma} \label{hannahlem}  Suppose $\Gamma$ is a separating $2n$-gon (resp.\,$(2n+1)$-gon)
 in $G$, a  triangulation of the plane.  In any Gr\"unbaum coloring of $G$ each color appears an even (resp.\,odd) number of times on $\Gamma$.  
\end{lemma}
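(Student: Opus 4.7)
My plan is to generalize the Kempe-chain parity argument in the proof of Lemma \ref{milesnate1}.  Two structural facts drive that proof and both extend verbatim here: (a) in any Gr\"unbaum coloring of a triangulation of any surface, every Kempe chain is a cycle in the dual graph $G^\ast$; and (b) since $G$ triangulates the plane and $\Gamma$ is separating, $\Gamma$ is a Jordan curve in $S^2$, so any closed curve in $S^2$ crosses $\Gamma$ an even number of times.  Combined, these say that for each pair of colors $\{p,t\}$, every $p$-$t$ Kempe chain contains an even number of edges of $\Gamma$.

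Next, let $a_r, a_b, a_g$ denote the number of edges of $\Gamma$ receiving each of the three colors.  Summing the edges of $\Gamma$ over all $p$-$t$ Kempe chains that meet $\Gamma$ yields $a_p + a_t \equiv 0 \pmod{2}$ for every pair $p \neq t$.  The three resulting congruences immediately force $a_r, a_b, a_g$ to share a common parity.

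The conclusion then follows from the identity $a_r + a_b + a_g = |\Gamma|$.  If $|\Gamma| = 2n$, three equal-parity integers whose sum is even must all be even; if $|\Gamma| = 2n+1$, the sum is odd so they must all be odd.  This is exactly the statement of the lemma.

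The one delicate point is (b): one must identify the primal edges of a $p$-$t$ Kempe chain with the edges of the corresponding cycle in $G^\ast$, and recognize that the Kempe-chain edges lying on $\Gamma$ are precisely the transverse crossings of that dual cycle with the Jordan curve $\Gamma$ in $S^2$.  Once this correspondence is spelled out, the standard Jordan-curve parity argument does the work, and the remaining bookkeeping is routine.  No feature of the specific value of $n$ enters the argument, so the unified proof handles both the even and odd cases simultaneously.
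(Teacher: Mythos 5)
Your proposal is correct and is essentially the argument the paper intends: the paper gives no separate proof of this lemma, stating only that the Kempe-chain argument of Lemma \ref{milesnate1} (every Kempe chain is a cycle, hence meets the separating cycle $\Gamma$ in an even number of edges, so $a_p + a_t$ is even for each pair of colors) ``can be easily modified,'' and your direct congruence bookkeeping is exactly that modification. Your version is if anything cleaner, since it replaces the pigeonhole-plus-contradiction packaging of the square case with a uniform parity computation valid for any $|\Gamma|$.
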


It follows that any separating square in a Gr\"unbaum-colored triangulation may have only the following types of colorings: $tttt$ (denoted C for ``constant"), $tptp$ (denoted A for ``alternating"), $ttpp$ (denoted B$_1$), and $tppt$ (denoted B$_2$).  Note that C and A are invariant under rotations and recoloring.  While B$_1$ and B$_2$ are equivalent under rotation, the orientation of a square within an embedding may require one and exclude the other.

 \begin{lemma}[\cite{eh}]\label{lemmilesnate2} 
  In the plane, a triangulation of the interior of a square has a partial Gr\"unbaum coloring of type  B$_1$ or  B$_2$ if and only if it also has a partial  Gr\"unbaum coloring of type A or C.
\end{lemma}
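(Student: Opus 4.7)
The plan is a Kempe-chain analysis in the planar dual $D$ of the given triangulation of $\Gamma$'s closed interior. Every inner dual vertex is $3$-valent (one per triangular face), while the outer-face dual vertex $v_\infty$ has degree $4$, one dual edge per boundary edge of $\Gamma$. In any partial Gr\"unbaum coloring each cubic vertex sees all three colors, so for any two colors $t,p$ the $t$- and $p$-colored edges form a subgraph that is $2$-regular at every cubic vertex. Consequently each $t$--$p$ Kempe chain is either a cycle inside $\Gamma$ or a path both of whose endpoints are boundary edges of $\Gamma$; a Kempe swap on one such component preserves the partial Gr\"unbaum property and alters only the colors of the boundary edges lying on that component.

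Label the boundary edges of $\Gamma$ cyclically $e_1,e_2,e_3,e_4$. In each of the types C, A, B$_1$, B$_2$ only two colors $t,p$ appear on $\Gamma$, so the four boundary edges are matched into pairs by the $t$--$p$ chain paths. Distinct $t$--$p$ chains share no edge, and since they use the same color pair they share no cubic vertex either; they are therefore vertex-disjoint plane subgraphs with endpoints cyclically arranged on the outer face, so the pairing they induce must be non-crossing. This leaves exactly two possibilities: $\mathcal{P}_1=\{\{e_1,e_2\},\{e_3,e_4\}\}$ or $\mathcal{P}_2=\{\{e_2,e_3\},\{e_4,e_1\}\}$.

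For ($\Rightarrow$), assume a B$_1=ttpp$ coloring (the B$_2$ case is symmetric). Under $\mathcal{P}_1$ the chain through $\{e_1,e_2\}$ joins the two boundary $t$-edges; swapping turns both into $p$ and yields $pppp$, i.e.\ type C. Under $\mathcal{P}_2$ the chain through $\{e_4,e_1\}$ joins a $t$-edge and a $p$-edge; swapping exchanges their colors to produce $ptpt$, i.e.\ type A. For ($\Leftarrow$), the same two pairings are available. From A $=tptp$, each pairing joins one boundary $t$-edge with one boundary $p$-edge, and a single swap produces $pttp$ (type B$_2$) under $\mathcal{P}_1$ or $pptt$ (type B$_1$) under $\mathcal{P}_2$. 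From C $=tttt$, each pairing joins two $t$-edges, and a swap converts them to $p$ while leaving the others unchanged, producing a B$_1$ or B$_2$ coloring.

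The only substantive step is the non-crossing claim for distinct $t$--$p$ Kempe chains: once one observes that two chains using the same color pair are both edge-disjoint and cubic-vertex-disjoint, the standard Jordan-curve argument for vertex-disjoint paths in a planar graph forces the non-crossing matching. I expect that carefully stating this topological step will be the main hurdle; the subsequent case table above then settles both directions of the biconditional.
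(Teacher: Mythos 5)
Your proof is correct and follows essentially the same route as the paper's: Kempe chains in the (cubic) dual pair up the boundary edges of the square, planarity/the Jordan Curve Theorem forces the pairing to be non-crossing (hence one of the two consecutive pairings), and a single swap converts between $\{$B$_1$, B$_2\}$ and $\{$A, C$\}$. You simply write out both directions and all cases explicitly, whereas the paper presents one representative case and appeals to reversibility and symmetry for the rest.
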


\begin{proof}  We give one direction of one case of the proof.  The other direction follows immediately since Kempe changes are reversible.  The second case is identical.  Suppose we have a triangulation of the interior of a square $\Gamma$.  Assume $\Gamma$ has a partial  Gr\"unbaum coloring of type B$_1$.  Consider the two  segments of 
 $p-t$ Kempe chains intersecting  the edges of $\Gamma$.  Either one of these intersects $\Gamma$ in two edges colored $t$ and the other intersects $\Gamma$ in two edges colored $p$, or  there are two Kempe chains each intersecting $\Gamma$ in one edge colored $t$ and one edge colored $p$.  In the former case we can make a Kempe change to yield a partial  Gr\"unbaum coloring of type C.  In the latter case where the edges of  $\Gamma$ are colored $ttpp$ we claim that Kempe chains must join consecutive edges; if not,
 there will be two crossing Kempe chains, contradicting the Jordan Curve Theorem.
    Then,  we can make one Kempe change to create a partial  Gr\"unbaum coloring of type A.
\end{proof}

\subsection{The $(4,4,4)$ embeddings of $K_6$.}

\begin{lemma}\label{lem444b}  Every $6$-chromatic toroidal triangulation containing the $(4,4,4)_B$ embedding of $K_6$ has a Gr\"unbaum coloring.
\end{lemma}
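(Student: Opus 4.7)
The plan is to adapt the strategy used for $H_7 + K_2$ in Lemma~\ref{TorusH7}: produce a small catalog of partial Gr\"unbaum colorings of the $15$ edges of $K_6$ (in its $(4,4,4)_B$ embedding) such that, for every way the three quadrilateral faces can be triangulated inside $G$, at least one catalog entry extends to a full Gr\"unbaum coloring of $G$.

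Let $F_1, F_2, F_3$ denote the three quadrilateral faces of the $(4,4,4)_B$ embedding of $K_6$, and let $T_i$ be the triangulation of the closed disk bounded by $F_i$ formed by the vertices and edges of $G$ lying in the closure of $F_i$. By the $4$CT each $T_i$ has a Gr\"unbaum coloring; by Lemma~\ref{hannahlem} the four boundary edges in any such coloring are of type C, A, B$_1$, or B$_2$; and by Lemma~\ref{lemmilesnate2} each $T_i$ therefore admits a Gr\"unbaum coloring whose boundary is of type A or type C (convert any B$_1$/B$_2$ extension via a Kempe change). Hence for each $i$, independently, there is an \emph{available} boundary type $\tau_i \in \{A,C\}$ for $T_i$.

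I would then exhibit, for each triple $(\tau_1,\tau_2,\tau_3) \in \{A,C\}^3$, a partial Gr\"unbaum coloring of $K_6$ in its $(4,4,4)_B$ embedding in which every facial triangle receives three distinct colors and the boundary of $F_i$ realizes type $\tau_i$. Because a type-A or type-C coloring of a $4$-cycle is invariant under permutation of the three colors, we may then permute the colors in the chosen Gr\"unbaum coloring of $T_i$ so that its boundary agrees exactly with the edges of $F_i$ as colored by $K_6$. Assembling the three recolored interiors with the partial coloring of $K_6$ yields a Gr\"unbaum coloring of $G$.

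The main obstacle is producing the catalog: constructing, in the $(4,4,4)_B$ embedding of $K_6$, edge $3$-colorings of the $15$ edges whose triangular faces are properly colored and whose three square faces realize each prescribed element of $\{A,C\}^3$. This is a finite enumeration, analogous to the colorings displayed in Figure~\ref{k2h7c3c5col} for $H_7 + K_2$, and the natural symmetries of $(4,4,4)_B$ should reduce the work to a handful of entries up to renaming of faces and colors. If some triple turns out to be unrealizable on $K_6$ directly, Lemma~\ref{lemmilesnate2} provides a second lever: a Kempe change performed inside one of the interiors $T_i$ may trade its available type from A to C (or vice versa), so that the catalog needs only to cover a proper subset of $\{A,C\}^3$.
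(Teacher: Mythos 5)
Your overall architecture---classify the boundary colorings achievable by each triangulated square, then exhibit a catalog of partial Gr\"unbaum colorings of the $(4,4,4)_B$ embedding covering every combination, and glue by permuting colors---is the same as the paper's. The gap is in the classification step. It is true that each $T_i$ admits a partial Gr\"unbaum coloring of type A or of type C, but you do not get to choose which, and Lemma \ref{lemmilesnate2} does not let you trade one for the other: it only converts between $\{$B$_1$, B$_2\}$ and $\{$A, C$\}$. Concretely, if $T_i$ is a square with a single interior diagonal, coloring the diagonal $t$ forces the boundary to read $pgpg$, $pggp$, $gppg$, or $gpgp$ --- type A or one fixed B-type, never C; if $T_i$ is the $4$-wheel, the spokes force the boundary to be C, B$_1$, or B$_2$, never A. So your ``second lever'' rests on a misreading of the lemma, and your plan genuinely requires a partial Gr\"unbaum coloring of $(4,4,4)_B$ realizing \emph{every} triple in $\{A,C\}^3$ --- including C\,C\,C when all three squares are filled with $4$-wheels, and mixed triples such as A\,A\,C otherwise. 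That realizability is the unproven crux, and the paper's own catalog indicates it fails: the six colorings in Figure \ref{444bcol} lean heavily on B-type squares, and the all-type-3 case is handled by B$_2$\,B$_2$\,C rather than C\,C\,C.

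What the paper does instead is retain the B-information. By gluing three specific exterior triangulations (the $4$-wheel and the two choices of diagonal) onto $T_i$ and applying the 4CT to each resulting planar triangulation, it shows that every $T_i$ admits a coloring of type (A or B$_1$), one of type (A or B$_2$), and one of type (C or B$_1$ or B$_2$); combined with Lemma \ref{lemmilesnate2} this places every $T_i$ into one of three types --- (A and B$_1$), (A and B$_2$), or (C and B$_1$ and B$_2$) --- and six partial Gr\"unbaum colorings of $(4,4,4)_B$ then cover all $27$ resulting combinations. Note that B$_1$ versus B$_2$ is orientation-sensitive within the embedding; that distinction, together with the B-options themselves, is exactly the flexibility your reduction to $\{A,C\}$ discards.
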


\begin{proof} Consider the three squares of the $(4,4,4)_B$ embedding.  We enumerate the colorings of   this configuration, and show that each possibility is compatible with a partial  Gr\"unbaum coloring of $(4,4,4)_B$.  Consider three specific exterior triangulations of the square, the 4-wheel and the two ways of adding a diagonal.  The first exterior triangulation gives rise to colorings of the square C, B$_1$, or B$_2$; one of the added-diagonal exterior triangulations has colorings of the square A or B$_1$; and the other added-diagonal exterior triangulation has colorings of the square A or B$_2$.  Therefore any triangulated square on a disk may be colored as (A or B$_1$) and (A or B$_2$) and (C or B$_1$ or B$_2$).


From Lemma \ref{lemmilesnate2}, we know that any square may be colored as (B$_1$ or B$_2$) and (A or C).  These conditions combine to give the logically equivalent statement that any square on a disk may be colored (A and B$_1$) or (A and B$_2$) or (C and B$_1$ and B$_2$).  Calling the coloring possibilities  (A and B$_1$) type 1, (A and B$_2$) type 2, and (C and B$_1$ and B$_2$) type 3, we have 27 total possibilities for coloring the set of three squares. 
Note that 111, 112, 121, 211, 221, 212, 122, and 222 may all be colored A A A; 113, 123, 311, 313, and 321 may all be colored B$_1$ A B$_1$; 223, 233, 323, and 333 may all be colored B$_2$ B$_2$ C;  213, 131, 133, and 231 may be colored A B$_1$ B$_1$; 331, 332, 132, and 312 may all be colored B$_1$ B$_1$ A; and, 232 and 322 may be colored B$_2$ B$_2$ A.
  It is now straightforward to verify that each of these colorings is compatible with a partial Gr\"unbaum coloring of 
 $(4,4,4)_B$ as shown in Figure \ref{444bcol}.
 \end{proof}
 
 \begin{figure}[h]
\begin{center}
\includegraphics[scale=.5]{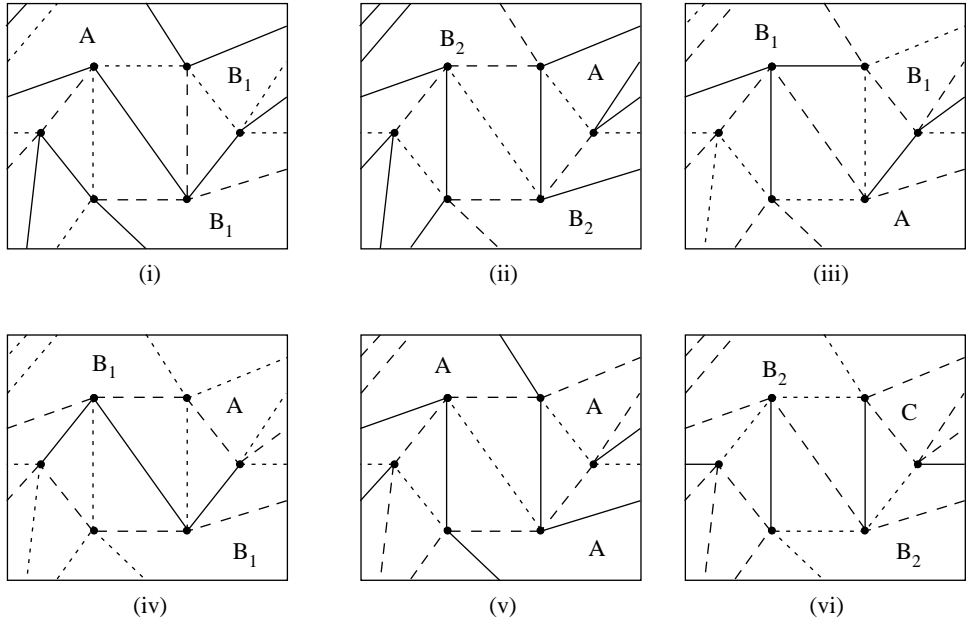}
\caption{Six  partial Gr\"unbaum colorings of $(4,4,4)_B$: (i) A B$_1$ B$_1$, (ii) B$_2$ B$_2$ A, (iii) B$_1$ A B$_1$, (iv) B$_1$ B$_1$ A, (v)  A A A, and (vi) B$_2$ B$_2$ C.}
\label{444bcol}
\end{center}
\end{figure}

\begin{lemma}\label{lem444a} Every $6$-chromatic toroidal triangulation containing the $(4,4,4)_A$ embedding of $K_6$ has a Gr\"unbaum coloring.
\end{lemma}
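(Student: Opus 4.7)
The plan is to mimic the proof of Lemma \ref{lem444b} as closely as possible, since the $(4,4,4)_A$ embedding differs from $(4,4,4)_B$ only in how the three separating squares are glued along the cylindrical axis. First I would identify the three squares $S_1, S_2, S_3$ in the $(4,4,4)_A$ embedding of $K_6$ and consider, for each square, the triangulated disk that fills it in the surrounding triangulation $G$. By Lemma \ref{lemmilesnate2} together with the case analysis from the preceding proof, the coloring possibilities for each such disk fall into one of three types: type 1 (both A and B$_1$ achievable), type 2 (both A and B$_2$ achievable), or type 3 (all of C, B$_1$, and B$_2$ achievable). This gives $3^3 = 27$ combinations of types to handle.

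Next, I would exhibit a short list of partial Gr\"unbaum colorings of the edges of the $(4,4,4)_A$ skeleton. Each such coloring assigns a pattern (A, B$_1$, B$_2$, or C) to each of the three squares and, together with the six triangular faces of the embedding, determines a valid $3$-coloring of every edge of the $K_6$. By grouping the 27 type-combinations according to which partial coloring is compatible with all three squares simultaneously, I would expect roughly six partial colorings to suffice, paralleling Figure \ref{444bcol}. Each chosen partial coloring of the skeleton then combines with compatible interior colorings of each triangulated disk (invoking Lemma \ref{lemmilesnate2} once more if the needed interior type is B$_1$ or B$_2$ rather than the directly witnessed A or C) to produce a Gr\"unbaum coloring of all of $G$.

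The main obstacle is that the laddered structure of $(4,4,4)_A$ treats the orientations of consecutive squares differently from $(4,4,4)_B$, so the chirality distinction between B$_1$ and B$_2$ on adjacent squares interacts in a new way. Consequently, the specific partial colorings of $(4,4,4)_B$ shown in Figure \ref{444bcol} cannot simply be transported, and I expect the bulk of the work to lie in determining which patterns $(X_1,X_2,X_3)\in\{\mathrm{A},\mathrm{B}_1,\mathrm{B}_2,\mathrm{C}\}^3$ are simultaneously realizable on the three squares under the laddered gluing, and then verifying that the realizable patterns cover every one of the 27 type-combinations. If a particular combination resists direct construction, the Kempe-chain recoloring framework set up earlier in this section should allow an established partial coloring to be converted into the missing one, keeping the proof parallel in structure and roughly equal in length to that of Lemma \ref{lem444b}.
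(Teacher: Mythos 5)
Your proposal follows essentially the same route as the paper: classify each of the three triangulated squares as type 1, 2, or 3, enumerate the $27$ combinations, and cover them with a short list of explicit partial Gr\"unbaum colorings of the $(4,4,4)_A$ skeleton (the paper additionally uses the rotational symmetry of $(4,4,4)_A$ to cut the $27$ cases to $11$, and gets away with four colorings --- A\,B$_1$\,B$_1$, A\,B$_2$\,B$_2$, A\,A\,A, and C\,C\,C --- rather than six). The remaining work you defer, exhibiting those colorings and checking coverage, is exactly what the paper delegates to Figure \ref{444acol}.
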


\begin{proof} Using the same notation as in the preceding lemma, we have 27 total possibilities for coloring the triple of squares.  However, the embedding $(4,4,4)_A$ is rotationally symmetric along the cylinder formed by the triple of squares, so these reduce to 11 possibilities.  Note that 113, 133, 233, 231, and 213 may be colored A B$_1$ B$_1$; 223 may be colored A B$_2$ B$_2$; 111, 112, 122, and 222 may be colored A A A; and, 333 may be colored C C C.  It is straightforward to verify that each of these colorings is compatible with a partial Gr\"unbaum coloring of  $(4,4,4)_A$ as shown in Figure \ref{444acol}.
 \end{proof}
 
 \begin{figure}[h]
\begin{center}
\includegraphics[scale=.55]{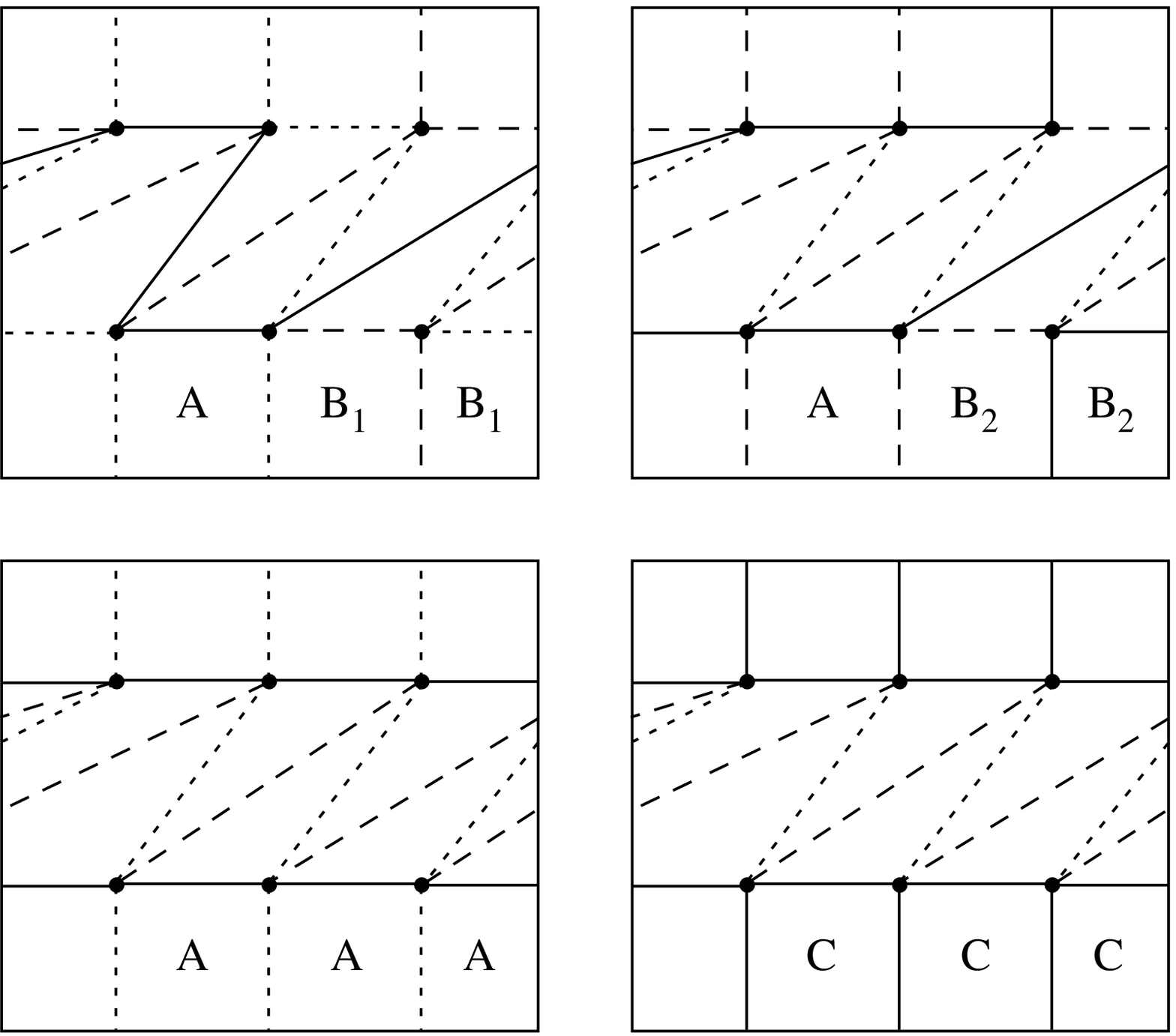}
\caption{Four partial Gr\"unbaum colorings of $(4,4,4)_A$.}
\label{444acol}
\end{center}
\end{figure}

\subsection{The $(5,4)$ embedding of $K_6$.}

\begin{lemma}\label{lem54} Every $6$-chromatic toroidal triangulation containing the $(5,4)$ embedding of $K_6$ has a Gr\"unbaum coloring.
\end{lemma}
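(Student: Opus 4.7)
The plan is to mimic the strategy of Lemmas \ref{lem444b} and \ref{lem444a}. Let $\Pi$ denote the pentagonal face and $\Gamma$ the quadrilateral face of the $(5,4)$ embedding of $K_6$ sitting inside a $6$-chromatic toroidal triangulation $G$. Removing the interiors of $\Pi$ and $\Gamma$ from $G$ leaves two planar triangulated disks, and the task reduces to showing that for every pair consisting of a realizable boundary coloring type on $\Pi$ and a realizable boundary coloring type on $\Gamma$, there is a compatible partial Gr\"unbaum coloring of the $(5,4)$-embedded $K_6$.

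For the square $\Gamma$, Lemma \ref{lemmilesnate2} already reduces the possible realizable boundary color sets to the three types used in Lemma \ref{lem444b}: type 1 (contains A and B$_1$), type 2 (contains A and B$_2$), and type 3 (contains C, B$_1$, and B$_2$). For the pentagon $\Pi$, Lemma \ref{hannahlem} requires each color to appear an odd number of times on its five boundary edges; since $0$ is even and the only way to write $5$ as a sum of three positive odd numbers is $1+1+3$, every boundary coloring of $\Pi$ has color distribution $(3,1,1)$ and is, up to cyclic rotation and color permutation, one of only two arrangements --- the two non-majority edges are either adjacent or separated by one majority edge.

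I would then produce a pentagonal analogue of Lemma \ref{lemmilesnate2}. Enumerating the small triangulations of a pentagonal disk (the five fan triangulations without interior vertices and the $5$-wheel with one interior vertex) and applying Kempe-chain swaps inside the disk --- using the Jordan Curve Theorem to forbid crossing chains exactly as in the proof of Lemma \ref{lemmilesnate2} --- should collapse the possible boundary colorings into a short list of Kempe-equivalence types. Each triangulated disk that can occur as the exterior of $\Pi$ in $G$ then realizes one of these pentagonal types.

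Finally, for each pair of (pentagonal type, square type) I would exhibit a partial Gr\"unbaum coloring of $K_6$ in its $(5,4)$ embedding whose boundary colorings on $\Pi$ and $\Gamma$ lie in the prescribed types, collecting the results into a figure analogous to Figures \ref{444bcol} and \ref{444acol}. Because the $(5,4)$ embedding has no nontrivial rotational symmetry interchanging $\Pi$ and $\Gamma$, every pair must be handled individually. The principal obstacle is exactly this combined classification: taming the $(3,1,1)$ colorings of $\Pi$ down to a manageable number of Kempe-equivalence types and then verifying that each such type is simultaneously compatible with each of the three square types on $\Gamma$.
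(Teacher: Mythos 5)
Your proposal follows essentially the same route as the paper's proof: decompose into the pentagonal and square faces, use Lemma \ref{hannahlem} to force the $(3,1,1)$ color distribution on the pentagon, reuse the three square types from Lemma \ref{lemmilesnate2}, build a pentagonal Kempe-equivalence analogue of that lemma, and finish by exhibiting explicit partial Gr\"unbaum colorings of the $(5,4)$ embedding covering every case. The details you flag as the ``principal obstacle'' (the ten rigid $j;k$ pentagon colorings --- cyclic rotation is not actually available since the pentagon sits rigidly in the embedding --- their Kempe-change graph, and the six explicit colorings of $(5,4)$) are exactly what the paper's case analysis supplies, so the plan is sound.
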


 \begin{figure}[h!]
\begin{center}\includegraphics[scale=.55]{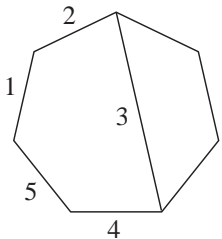}
\caption{The non-triangular faces of the $(5,4)$ embedding of $K_6$.}
\label{54faces}
\end{center}
\end{figure}

\begin{proof}
Consider a triangulation of the torus containing the $(5,4)$ embedding of $K_6$.  We label the edges of the triangulated pentagon as shown in Figure \ref{54faces}.  Removing the triangulated pentagon, placing it in the plane, and triangulating its exterior, we obtain a Gr\"unbaum coloring of this triangulation and thus a partial Gr\"unbaum coloring of the triangulated pentagon.  We know from Lemma \ref{hannahlem} that each of the three colors must appear an odd number of times.  Thus one color, say $t$, must appear exactly three times while the other two colors, say $p$ and $g$, 
   each appear once.   We let $j;k$ denote the instance where $p$ is assigned to edge $j$ and $g$ is assigned to edge $k$. Since the naming of the colors could be reversed, if a coloring is of the form $j;k$, it could also be labeled $k;j$.  


Suppose we have a triangulated pentagon in the disk colored so that the three edges colored $t$ are consecutive, i.e.,
 $|k-j| = 1$.  As in the proof of Lemma \ref{lemmilesnate2},
  the $j;j+1$ coloring can be Kempe changed to either $j;j+2$ or $j;j+4$ but not to $j;j+3$.  Reversing colors, we see that a triangulated pentagon colored $j;j+4$ may be Kempe changed to one colored  $j;j+1$ or $j;j+3$. Similarly, a triangulated pentagon colored $j;j+2$ may be Kempe changed to one colored $j;j+1$ or $j;j+3$ and a triangulated pentagon colored $j;j+3$ may be Kempe changed to one colored $j;j+2$ or $j;j+4$. 


Now suppose we have a triangulation of the torus containing the $(5,4)$ embedding of $K_6$.  It contains a triangulated pentagon whose colorings we have discussed above,
 and a triangulated square that has
  coloring type 1 (A and B$_1$), type 2 (A and B$_2$), or type 3 (C and B$_1$ and B$_2$).  Examining the partial Gr\"unbaum colorings of $(5,4)$ in Figure \ref{54col}, we see that these can be colored with corresponding pentagonal and square colorings, as 
$tptpptg \rightarrow 2;5$ B$_1$,
$tpgpptt \rightarrow 2;3$ B$_1$,
$tttppgp \rightarrow 4;5$ B$_1$,
$tpptpgt \rightarrow 2;4$ A,
$ tppgpgg \rightarrow 1;2$ A, and
$ttptppg \rightarrow 4;5$ A.

 \begin{figure}[h]
\begin{center}
\includegraphics[scale=.5]{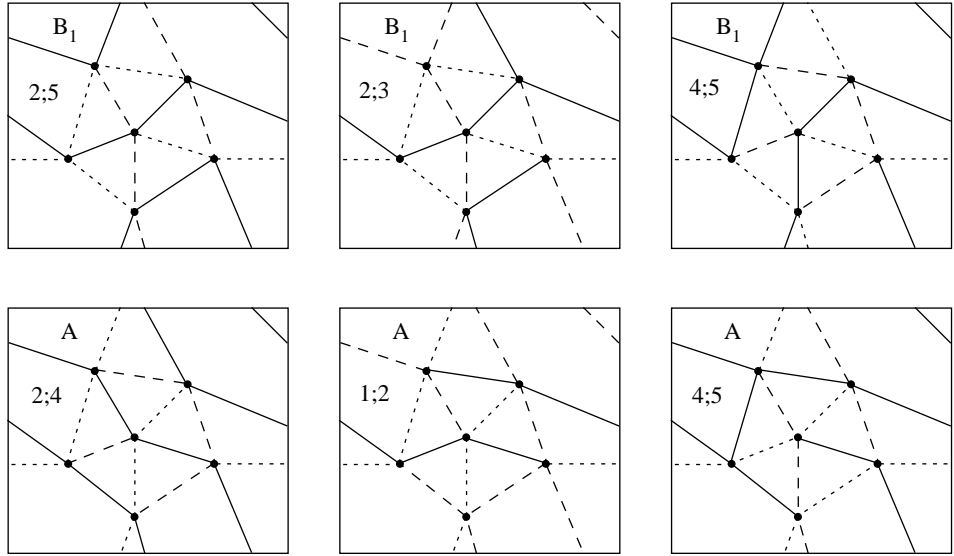}
\caption{The six required partial Gr\"unbaum colorings of $(5,4)$.}
\label{54col}
\end{center}
\end{figure}

We now argue that any possible triangulation of a pentagon identified along an edge with a square has a partial Gr\"unbaum coloring that
 can be Kempe changed to one of the six partial Gr\"unbaum colorings listed above.  First, suppose that the square has coloring type 1 or 2.  It may be colored A, and so a pentagon with a triangulation-induced coloring $2;4$, $1;2$, or $4;5$ will (together with an A-colored square) form a heptagon compatible with a partial Gr\"unbaum coloring of $(5,4)$.  We proceed with the remaining 7 $j;k$ colorings of the pentagon, assuming the square is type 1 or 2,  by  sequentially reducing them  to cases already accounted for.
 A pentagon with coloring $4;1$ can be Kempe changed to be colored $4;5$ or $4;2$, both of which are already known to (together with a type 1 or type 2 square) form a heptagon compatible with a partial Gr\"unbaum coloring of $(5,4)$.  Similarly  a pentagon with coloring $4;3$ can also be Kempe changed to $4;5$ or $4;2$.  Pentagons with coloring $1;3$ or  with coloring $1;5$  can be Kempe changed to be colored $1;2$ or $1;4$, both of which are already known to form compatible heptagons.  Those with coloring $2;3$ or $2;5$ can be Kempe changed to be colored $2;4$ or $2;1$, both of which are already done.   Finally, a pentagon with coloring $3;5$ can be Kempe changed to be colored $3;4$ or $3;1$, both of which are already done.  This accounts for all ten possible partial Gr\"unbaum colorings of a pentagon combined with a type 1 or type 2 square to form a heptagon.


Now suppose that the square has coloring type 3.  It may be colored B$_1$, and so a pentagon with triangulation-induced coloring $2;5$, $2;3$, or $4;5$ will (together with a B$_1$-colored square) form a heptagon compatible with a partial Gr\"unbaum coloring of $(5,4)$. We follow the same type of strategy as above.
Pentagons with coloring $2;4$ or   $2,1$ can each be Kempe changed to one of $2;3$ or $2;5$. Those with $5;1$ or $5,3$ can  be changed to one of $5,2$ and $5;4$.  Next, those with $4;1$ or $4;3$  each change to one of $4;5$ and $4;2$.
 Finally, a pentagon with coloring $1;3$ can be changed to either $1;2$ or $1;4$. 
 This accounts for all ten possible partial Gr\"unbaum colorings of a pentagon combined with a type 3 square to form a heptagon, and completes the proof.

\end{proof}

\subsection{The $(6)$ embedding of $K_6$.}

\begin{lemma}\label{lem6} Every 6-chromatic toroidal triangulation containing the $(6)$ embedding of $K_6$ has a Gr\"unbaum coloring.
\end{lemma}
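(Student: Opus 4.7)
I would follow the strategy of Lemma \ref{lem54}. Let $\Gamma$ denote the hexagonal face of the $(6)$ embedding of $K_6$; inside $G$ the interior of $\Gamma$ is a triangulated disk. Cut $\Gamma$ out of the torus, place it in the plane, and attach a new vertex $u$ in the unbounded region joined to each vertex of $\Gamma$. The result $\Gamma^*$ is a planar triangulation, hence Gr\"unbaum-colorable by Lemma \ref{lem4}. Restricting any such coloring to $\Gamma$ yields a partial Gr\"unbaum coloring of the boundary hexagon. Because $\Gamma$ is a separating $6$-gon in $\Gamma^*$, Lemma \ref{hannahlem} forces the color multiset on $\Gamma$ to be one of $(6,0,0)$, $(4,2,0)$, or $(2,2,2)$.

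The next step is to enumerate the cyclic color patterns on $\Gamma$ in each multiset up to permutation of colors and the symmetries of the $(6)$ embedding of $K_6$ that preserve $\Gamma$, and then to use Kempe chain arguments---exactly as in Lemmas \ref{lemmilesnate2} and \ref{lem54}---to determine which patterns are interconvertible via Kempe changes performed on $\Gamma^*$. Crossing Kempe chains are impossible in the plane, so the Jordan Curve Theorem sharply restricts which pairs of boundary edges of $\Gamma$ can lie in a common $p\text{-}t$ chain; this yields a reachability partition of the possible boundary patterns. The aim is to produce a short list of ``target'' patterns, one representative per reachability class, together with, for each target, an explicit partial Gr\"unbaum coloring of the $(6)$ embedding of $K_6$ whose restriction to $\Gamma$ matches the target. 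Given any $G$ satisfying the hypothesis, we take a Gr\"unbaum coloring of $\Gamma^*$, apply Kempe changes on $\Gamma^*$ until the boundary pattern on $\Gamma$ coincides with one of the targets, discard $u$ and its incident edges, and install the corresponding partial Gr\"unbaum coloring of $K_6$; the colors in the interior of $\Gamma$ (inherited from $\Gamma^*$) then complete a Gr\"unbaum coloring of all of $G$.

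The principal obstacle is combinatorial bookkeeping: relative to the square and pentagon cases, a hexagon admits considerably more cyclic color patterns---especially in the balanced class $(2,2,2)$---and several Kempe chains can meet $\Gamma$ in parallel, so the Kempe-reduction step involves more subcases than Lemma \ref{lem54}. Two features cut the case list down to a manageable size: the $(6)$ embedding carries a nontrivial automorphism group acting on $\Gamma$, and $K_6$ has nine edges off $\Gamma$, which affords ample flexibility when constructing the required partial Gr\"unbaum colorings of $K_6$ by inspection. I therefore expect the work to concentrate in a careful Kempe-chain case analysis, with an accompanying figure analogous to Figure \ref{54col} exhibiting one partial Gr\"unbaum coloring of the $(6)$ embedding for each target pattern.
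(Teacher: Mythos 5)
Your plan matches the paper's proof essentially step for step: cut out the triangulated hexagon, planarize it to obtain a partial Gr\"unbaum coloring, invoke Lemma \ref{hannahlem} to get the parity classes $(2,2,2)$, $(4,2,0)$, $(6,0,0)$, enumerate boundary patterns up to the dihedral symmetry of the $(6)$ embedding, and Kempe-reduce each pattern to one of four target patterns realized by explicit partial Gr\"unbaum colorings of $(6)$ (the paper's Figure \ref{6col}). The only notable detail you leave implicit---and which the paper makes explicit to tame the Kempe bookkeeping---is that whenever two colors occupy exactly four hexagon edges, those edges behave as a separating square, so Lemma \ref{lemmilesnate2} applies directly.
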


\begin{proof} Consider a triangulation of the torus containing the $(6)$ embedding of $K_6$.  The hexagonal face of $(6)$ is a triangulated hexagon; this may be removed, placed in the plane, and by triangulating the exterior we see that the resulting graph has a Gr\"unbaum coloring and thus the triangulated hexagon has a partial Gr\"unbaum coloring.  By Lemma \ref{hannahlem},  
the number of hexagon edges in each color must be even.  These may be allocated to $g,p$, and $t$ as 
2, 2, 2, or 0, 2, 4, or 0, 0, 6.

Because $K_7$ has a unique embedding on the torus, and $(6)$ is merely $K_7$ with one vertex removed, $(6)$ is dihedrally symmetric outside of the hexagonal face.  Thus, we can consider partial Gr\"unbaum colorings up to rotation and reflection; the orientation of a triangulation inside the hexagonal face can determine the colors on the edges of the hexagon, which, if they match any rotation/reflection of a partial Gr\"unbaum coloring of $(6)$, will determine a coloring of the remainder of the triangulation.

Let us list all possible partial Gr\"unbaum colorings of a hexagon, up to rotations and reflections.  We have $pppppp$, $ttpppp$, $tptppp$, $tpptpp$, $ttppgg$, $ttpgpg$,  $tpgtgp$, $tpgtpg,$ and $tpptgg$.  Of these, $ttpppp, tpptpp, ttppgg,$ and $tpgtpg$ appear as part of partial Gr\"unbaum colorings of $(6)$, shown in Figure \ref{6col}.  It remains to be shown that if a partial Gr\"unbaum coloring of a triangulated hexagon induces any of the other five colorings, there exist Kempe changes that render the coloring equivalent to one of the four hexagonal subsets of partial Gr\"unbaum colorings of $(6)$.

 \begin{figure}[h]
\begin{center}
\includegraphics[scale=.6]{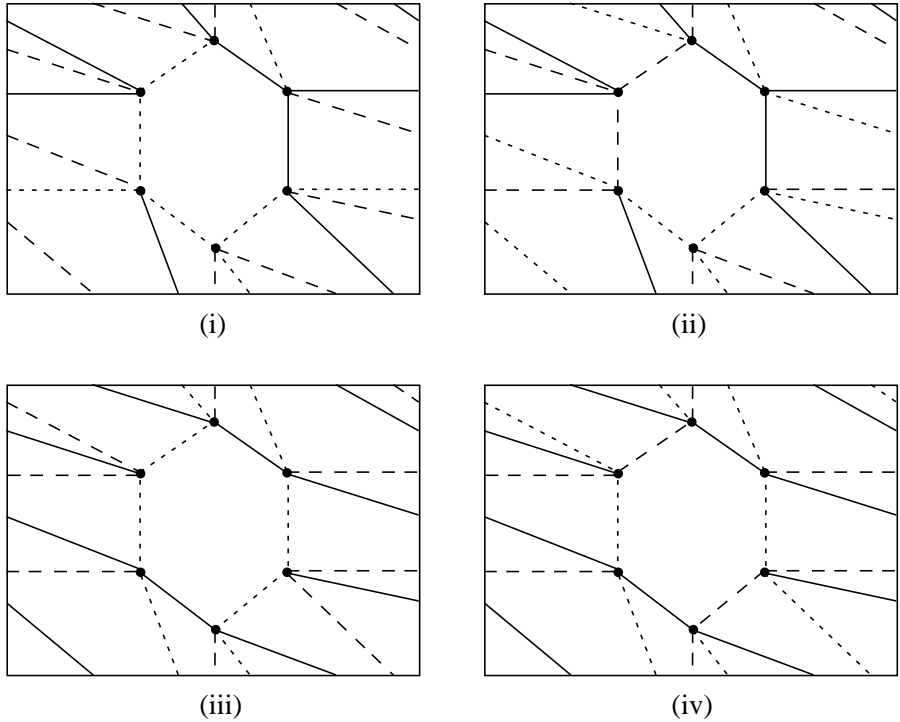}
\caption{Four partial Gr\"unbaum colorings of $(6)$, (i) $ttpppp$, (ii)  $ttppgg$, (iii) $tpptpp,$ and (iv) $tpgtpg$.}
\label{6col} 
\end{center}
\end{figure}

Now, note that if two colors use exactly 4 hexagon edges, occurring twice each or with all four edges of one color and none of the other, then these four edges behave as those of a square.  That is, the four edges are colored A, C, B$_1$, or B$_2$.  Thus, the proof of Lemma \ref{lemmilesnate2} holds, and we may conclude that Kempe chains connect only adjacent edges (where adjacency is considered relative to the ``square") and not opposing edges (where again, adjacency is considered relative to the ``square").

Consider each of the five remaining colorings in turn. For each make a 
Kempe change on the first $p$ edge. The same argument as Lemma \ref{lemmilesnate2} determines the possible new colorings.  These will either be, or color permute to, a coloring already determined  compatible with a partial Gr\"unbaum coloring of $(6)$.

If a partial Gr\"unbaum coloring of a triangulated hexagon induces the coloring  $tpgtgp$ on the hexagon, make a $p-g$ Kempe change on the first $p$ edge to obtain $tgptgp$, which color-permutes to $tpgtpg$,  or to obtain $tggtgg$ which color-permutes to $tpptpp$.
For the  induced hexagon  coloring $tpptgg$,  make a $p-g$ Kempe change on the first $p$ edge to obtain $tggtgg$, or to obtain  $tgptgp$, which color-permutes to $tpgtpg$.
For the  induced hexagon  coloring  $ttpgpg$,  make a $p-g$ Kempe change on the first $p$ edge to  obtain $ttgppg$, which color-permutes to $ggtppt$, which in turn reflects to $tpptgg$, or  obtain $ttggpp$, which color-permutes to $ttppgg$.
For the  induced hexagon  coloring  $tptppp$, make a $p-g$ Kempe change on the first $p$ edge to obtain $tgtgpp$, which color-permutes to $gpgptt$, which in turn reflects to $ttpgpg$, or  obtain $tgtppg$, which color-permutes to $gpgttp$ and rotates to $ttpgpg$.
Finally, for the  induced hexagon  coloring  $pppppp$,  make a $p-t$ Kempe change on the first $p$ edge to obtain one of $ttpppp, tptppp$, or $tpptpp$.    The colorings $ttpppp$ and $tpptpp$ are compatible with partial Gr\"unbaum colorings of $(6)$, and $tptppp$ was dealt with above.  
\end{proof}

\emph{Acknowledgments.}  Thanks to sarah-marie belcastro's topological graph theory class at the 2007 session of the Hampshire College Summer Studies in Mathematics (particularly Miles Edwards and Nate Harman), for enthusiastically working on open problems, some of whose solutions appear here.


\end{document}